\numberwithin{equation}{section}
\newlength{\leftstackrelawd}
\newlength{\leftstackrelbwd}
\def\leftstackrel#1#2{\settowidth{\leftstackrelawd}%
	{${{}^{#1}}$}\settowidth{\leftstackrelbwd}{$#2$}%
	\addtolength{\leftstackrelawd}{-\leftstackrelbwd}%
	\leavevmode\ifthenelse{\lengthtest{\leftstackrelawd>0pt}}%
	{\kern-.5\leftstackrelawd}{}\mathrel{\mathop{#2}\limits^{#1}}}
\theoremstyle{plain}
\newtheorem{thm}{Theorem}[section]
\newtheorem{cor}[thm]{Corollary}
\newtheorem{prop}[thm]{Proposition}
\newtheorem*{thm*}{Theorem}
\theoremstyle{definition}
\newtheorem{?}[thm]{Problem}
\newenvironment{customthm}[1]
{\innercustomthm}
{\endinnercustomthm}
\newenvironment{customrmk}[1]
{\innercustomrmk}
{\endinnercustomrmk}
\newcommand{\KN}{\mathbin{\bigcirc\mspace{-15mu}\wedge\mspace{3mu}}}
\newcommand{\ep}{\varepsilon}
\renewcommand{\epsilon}{\varepsilon}
\def\@cite#1#2{[\textbf{#1\if@tempswa , #2\fi}]}
\def\@biblabel#1{[\textbf{#1}]}
\def\XXint#1#2#3{{\setbox0=\hbox{$#1{#2#3}{\int}$}
		\vcenter{\hbox{$#2#3$}}\kern-.5\wd0}}
\newcommand*{\defeq}{\mathrel{\rlap{%
			\raisebox{0.3ex}{$\m@th\cdot$}}%
		\raisebox{-0.3ex}{$\m@th\cdot$}}%
	=}
\newcommand*{\eqdef}{=\mathrel{\rlap{%
			\raisebox{0.3ex}{$\m@th\cdot$}}%
		\raisebox{-0.3ex}{$\m@th\cdot$}}%
	}
\newcounter{marnote}
\def\underbracex#1#2{\mathop{\vtop{\m@th\ialign{##\crcr
				$\hfil\displaystyle{#2}\hfil$\crcr
				\noalign{\kern3\p@\nointerlineskip}%
				#1\crcr\noalign{\kern3\p@}}}}\limits}
\def\upbracefilla{$\m@th \setbox\z@\hbox{$\braceld$}%
	\bracelu\leaders\vrule \@height\ht\z@ \@depth\z@\hfill 
	\kern\p@\vrule \@width\p@\kern\p@\vrule \@width\p@\kern\p@\vrule \@width\p@
	$}
\def\upbracefillb{$\m@th \setbox\z@\hbox{$\braceld$}%
	\vrule \@width\p@\kern\p@\vrule \@width\p@\kern\p@\vrule \@width\p@\kern\p@
	\leaders\vrule \@height\ht\z@ \@depth\z@\hfill\bracerd
	\braceld\leaders\vrule \@height\ht\z@ \@depth\z@\hfill
	\kern\p@\vrule \@width\p@\kern\p@\vrule \@width\p@\kern\p@\vrule \@width\p@
	$}
\def\upbracefillc{$\m@th \setbox\z@\hbox{$\braceld$}%
	\vrule \@width\p@\kern\p@\vrule \@width\p@\kern\p@\vrule \@width\p@\kern\p@
	\leaders\vrule \@height\ht\z@ \@depth\z@\hfill
	\kern\p@\vrule \@width\p@\kern\p@\vrule \@width\p@\kern\p@\vrule \@width\p@
	$}
\def\upbracefilld{$\m@th \setbox\z@\hbox{$\braceld$}%
	\vrule \@width\p@\kern\p@\vrule \@width\p@\kern\p@\vrule \@width\p@\kern\p@
	\leaders\vrule \@height\ht\z@ \@depth\z@\hfill\braceru$}
\def\upbracefillbd{$\m@th \setbox\z@\hbox{$\braceld$}%
	\vrule \@width\p@\kern\p@\vrule \@width\p@\kern\p@\vrule \@width\p@\kern\p@
	\bracerd\braceld
	\leaders\vrule \@height\ht\z@ \@depth\z@\hfill\braceru$}
\begin{document}
	\title{\vspace*{-10mm}Fully nonlinear Yamabe-type problems on non-compact manifolds}
	\author{Jonah A. J. Duncan\footnote{Johns Hopkins University, 404 Krieger Hall, Department of Mathematics, 3400 N.~Charles St, Baltimore, MD 21218, US. Email: jdunca33@jhu.edu} ~and Yi Wang\footnote{Johns Hopkins University, 404 Krieger Hall, Department of Mathematics, 3400 N.~Charles St, Baltimore, MD 21218, US. Email: ywang261@jhu.edu; research partially supported by NSF DMS-184503}}	
	\maketitle
	\vspace*{-4mm}
	\begin{abstract}
\vspace*{2mm}We obtain existence results for a class of fully nonlinear Yamabe-type problems on non-compact manifolds, addressing both the so-called positive and negative cases. We also give explicit examples of manifolds with warped product ends and asymptotically flat ends satisfying the hypotheses of our theorems.
	\end{abstract}

\section{Introduction}

Let $(M^n,g_0)$ be a Riemannian manifold of dimension $n \geq 3$, and denote by
\begin{align*}
A_{g_0} = \frac{1}{n-2}\bigg(\operatorname{Ric}_{g_0} - \frac{R_{g_0}}{2(n-1)}g_0\bigg)
\end{align*}
the Schouten tensor of $g_0$, where $\operatorname{Ric}_{g_0}$ and $R_{g_0}$ are the Ricci curvature tensor and scalar curvature of $g_0$, respectively. By the Ricci decomposition $\operatorname{Riem}_{g_0} = W_{g_0} + A_{g_0} \KN g_0$ of the Riemann curvature tensor and the conformal invariance of the $(1,3)$-Weyl tensor $g_0^{-1}W_{g_0}$, the Schouten tensor completely determines the conformal transformation properties of the Riemann curvature tensor and is thus a central quantity in conformal geometry. 

Since the work of Viaclovsky \cite{Via00a} and Chang, Gursky \& Yang \cite{CGY02a}, there has been significant interest in fully nonlinear equations and differential inclusions involving the eigenvalues of the Schouten tensor. To describe these problems, let $\lambda(g^{-1}A_g)$ denote the eigenvalues of the $(1,1)$-tensor $g^{-1}A_g$, and suppose that
\begin{align}
& \Gamma\subset\mathbb{R}^n\text{ is an open, convex, connected symmetric cone with vertex at 0}, \label{21'} \\
& \Gamma_n^+ = \{\lambda\in\mathbb{R}^n: \lambda_i > 0 ~\forall ~1\leq i \leq n\} \subseteq \Gamma \subseteq \Gamma_1^+ =  \{\lambda\in\mathbb{R}^n : \lambda_1+\dots+\lambda_n > 0\}, \label{22'} \\
& f\in C^\infty(\Gamma)\cap C^0(\overline{\Gamma}) \text{ is concave, 1-homogeneous and symmetric in the }\lambda_i, \label{23'}  \\
& f>0 \text{ in }\Gamma, \quad f = 0 \text{ on }\partial\Gamma, \quad f_{\lambda_i} >0 \text{ in } \Gamma \text{ for }1 \leq i \leq n. \label{24'}
\end{align}
We first identify two problems in the so-called positive case: to determine when there exists a conformal metric $g\in[g_0]$ satisfying 
\begin{align}\label{-1}
\lambda(g^{-1}A_g)\in\Gamma \quad \text{on }M,
\end{align}
and to determine when there exists $g\in[g_0]$ satisfying 
\begin{align}\label{-1'}
\lambda(g^{-1}A_g)\in\partial\Gamma \quad \text{on }M.
\end{align}
Likewise, in the so-called negative case one is interested in the existence of a conformal metric $g\in[g_0]$ satisfying
\begin{align}\label{-2}
\lambda(-g^{-1}A_g)\in\Gamma \quad \text{on }M,
\end{align}
and the existence of $g\in[g_0]$ satisfying 
\begin{align}\label{-2'}
\lambda(-g^{-1}A_g)\in\partial\Gamma \quad \text{on }M.
\end{align}
If there exists a metric $g$ satisfying \eqref{-1} (resp.~\eqref{-2}), one can then consider the associated Yamabe-type problem of prescribing $f(\lambda(g^{-1}A_g))$ (resp.~$f(\lambda(-g^{-1}A_g))$) to be a positive constant or function. Note that either of \eqref{-1'} or \eqref{-2'} already implies that $g$ solves the Yamabe-type equation $f(\lambda(g^{-1}A_g))=0$ on $M$. 

Important examples of $(f,\Gamma)$ satisfying \eqref{21'}--\eqref{24'} are given by $(f,\Gamma) = (\sigma_k^{1/k}, \Gamma_k^+)$ for $1\leq k \leq n$, where $\sigma_k:\mathbb{R}^n\rightarrow \mathbb{R}$ is the $k$'th elementary symmetric polynomial defined by
\begin{align*}
\sigma_k(\lambda_1,\dots,\lambda_n) = \sum_{1\leq i_1<\dots<i_k\leq n}\lambda_{i_1}\dots\lambda_{i_k},
\end{align*}
and $\Gamma_k^+ = \{\lambda\in\mathbb{R}^n:\sigma_j(\lambda)>0  \text{ for all }j=1,\dots,k\}$. When $k=1$, \eqref{-1'} and \eqref{-2'} are equivalent to $R_g = 0$, and \eqref{-1} (resp.~\eqref{-2}) is equivalent to $R_g>0$ (resp.~$R_g<0$). When $M$ is closed, these three possibilities are mutually exclusive within a fixed conformal class, and finding $g\in[g_0]$ such that $\sigma_1(\lambda(g^{-1}A_g))$ is constant is the classical Yamabe problem, solved by Yamabe, Trudinger, Aubin and Schoen \cite{Yam60, Tru68, Aub76, Sch84}. When $M$ is closed and $\Gamma\not=\Gamma_1^+$, the existence of conformal metrics satisfying differential inclusions of the form \eqref{-1} and/or \eqref{-1'} has been considered in \cite{CGY02a, CD10, GLW10, DN22, LN20}, for example. The problem of then deforming to a conformal metric with $f(\lambda(g^{-1}A_g))$ constant has been considered in \cite{GW03a, LL03, LL05, GW06, GV07, LN14, CGY02b, STW07}, for example. Of particular interest is the case $(f,\Gamma) = (\sigma_k^{1/k},\Gamma_k^+)$ for $k\geq 2$, often referred to as the $\sigma_k$-Yamabe problem in the positive case. For similar Yamabe-type problems on closed manifolds in the negative case (assuming the background metric satisfies \eqref{-2}), see e.g.~\cite{GV03b, LN20b}. By the transformation law for the Schouten tensor, 
\begin{align*}
A_{g_u} = -\nabla_{g_0}^2 u + du\otimes du - \frac{1}{2}|\nabla_{g_0} u|_{g_0}^2 g_0 + A_{g_0}, \quad g_u = e^{2u}g_0,
\end{align*}
these Yamabe-type problems are equivalent to solving a fully nonlinear, non-uniformly elliptic equation in $u$ when $\Gamma \not = \Gamma_1^+$. Moreover, the equations \eqref{-1'} and \eqref{-2'} are locally strictly elliptic when $(1,0,\dots,0)\in\Gamma$ (equivalently, when $\overline{\Gamma_n^+}\backslash\{0\}\subset \Gamma$), and degenerate elliptic when $\Gamma \not=\Gamma_1^+$ and $(1,0,\dots,0)\in\partial\Gamma$. We refer the reader to the discussion following Remark \ref{68} for further details on these conditions.

In this paper, we study the differential inclusions \eqref{-1}--\eqref{-2'} and their associated Yamabe-type problems on non-compact manifolds. Before stating our main results, we recall from \cite{LN14b} the quantity $\mu_\Gamma^+$, defined to be the number uniquely determined by $\Gamma$ such that
\begin{align*}
(-\mu_\Gamma^+, 1, \dots, 1)\in\partial \Gamma.  
\end{align*}
It is easy to see that $\mu_\Gamma^+\in[0,n-1]$ and $\mu_{\Gamma_k^+}^+ = \frac{n-k}{k}$, and thus $\mu_{\Gamma_k^+}^+>1$ if and only if $k<\frac{n}{2}$.

Our first main result concerns the negative cases \eqref{-2} and \eqref{-2'}:

\begin{thm}\label{H}
	Suppose $(f,\Gamma)$ satisfies \eqref{21'}--\eqref{24'} and $\mu_\Gamma^+>1$. Let $(M^n,g_0)$ be a smooth complete non-compact manifold such that, for some constant $c>0$ and some compact set $K_0\subset M$, it holds that $f(\lambda(-g_0^{-1}A_{g_0})) \geq c>0$ and $\lambda(-g_0^{-1}A_{g_0})\in\Gamma$ on $M\backslash K_0$.
	\begin{enumerate}[1.]
		\item If $(1,0,\dots,0)\in\Gamma$, then there exists a complete $C^{1,1}_{\operatorname{loc}}$ metric $g\in[g_0]$ satisfying 
		\begin{align}\label{065}
		\lambda(-g^{-1}A_g)\in\overline{\Gamma}
		\end{align}
		a.e.~on $M$. More precisely, at least one of the following holds: 
		\begin{enumerate}
			\item For any positive $\psi\in C^\infty_{\operatorname{loc}}(M)$ with $\|\ln\psi\|_{L^\infty(M)}<\infty$, there exists a complete $C^\infty_{\operatorname{loc}}$ metric $g\in[g_0]$ satisfying
			\begin{align}\label{054}
			f(\lambda(-g^{-1}A_g)) = \psi, \quad \lambda(-g^{-1}A_g)\in\Gamma \quad \text{on }M.
			\end{align}
			\item There exists a complete $C^{1,1}_{\operatorname{loc}}$ metric $g\in[g_0]$ satisfying \eqref{-2'} a.e.
		\end{enumerate}
		
		\item If $(1,0,\dots,0)\in\partial\Gamma$, then there exists a complete $C^{0,1}_{\operatorname{loc}}$ metric $g\in[g_0]$ satisfying \eqref{065} in the viscosity sense on $M$. More precisely, at least one of the following holds: 
		\begin{enumerate}
			\item For any positive $\psi\in C^\infty_{\operatorname{loc}}(M)$ with $\|\ln\psi\|_{L^\infty(M)}<\infty$, there exists a complete $C^{0,1}_{\operatorname{loc}}$ metric $g\in[g_0]$ satisfying \eqref{054} in the viscosity sense.
			\item There exists a complete $C^{0,1}_{\operatorname{loc}}$ metric $g\in[g_0]$ satisfying \eqref{-2'} in the viscosity sense.
		\end{enumerate}
	\end{enumerate}
\end{thm}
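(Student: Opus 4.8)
The plan is to reduce the problem to a PDE on a large compact domain and then exhaust $M$. Write $g = e^{2u}g_0$; by the transformation law for the Schouten tensor, the condition $\lambda(-g^{-1}A_g) \in \Gamma$ with $f(\lambda(-g^{-1}A_g)) = \psi$ becomes a fully nonlinear equation $F(\nabla^2_{g_0} u, \nabla_{g_0} u, u, x) = \psi e^{2u}$ of the form $f(\lambda(-g_0^{-1}A_{g_u})) = \psi$, which — crucially, in the negative case — is \emph{properly ordered} in $u$ (the zeroth-order term $\psi e^{2u}$ is increasing in $u$, after one accounts for the sign). This monotonicity is what makes the negative case tractable without a Yamabe-type variational obstruction: it gives comparison principles and a priori $C^0$ bounds on bounded domains. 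First I would fix an exhaustion $K_0 \subset \Omega_1 \subset \Omega_2 \subset \cdots$ of $M$ by smooth precompact domains, and on each $\Omega_j$ solve a Dirichlet problem $f(\lambda(-g_0^{-1}A_{g_{u_j}})) = \psi_j$ in $\Omega_j$ with $u_j \to +\infty$ suitably near $\partial\Omega_j$ (or with boundary data chosen so that the metric blows up, forcing completeness in the limit), using the existence theory for fully nonlinear elliptic equations of this type (Guan, Li–Nguyen, etc.) together with the hypothesis $\lambda(-g_0^{-1}A_{g_0}) \in \Gamma$, $f(\lambda(-g_0^{-1}A_{g_0})) \geq c > 0$ on $M \setminus K_0$, which provides a global subsolution on the non-compact end.

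The key a priori estimates are: (i) a $C^0$ bound, coming from the comparison principle applied against the background metric $g_0$ on the end (where it is already admissible with $f \geq c$) and against explicit barriers near $K_0$; (ii) a local gradient estimate and local $C^{1,1}$ (Pogorelov-type / second-derivative) estimate, which for these concave fully nonlinear Schouten equations hold on compact subsets with constants depending only on the $C^0$ norm on a slightly larger set — this is where $\mu_\Gamma^+ > 1$ enters, as it is precisely the condition under which the relevant local second-order estimates for the negative-case equation are available (it controls the ellipticity/the geometry of $\Gamma$ near the "bad" direction). With these, a diagonal/Arzelà–Ascoli argument extracts a subsequence $u_j \to u_\infty$ in $C^{1,\alpha}_{\operatorname{loc}}$, with $u_\infty \in C^{1,1}_{\operatorname{loc}}$ solving $\lambda(-g^{-1}A_g) \in \overline{\Gamma}$ a.e., and completeness of $g = e^{2u_\infty}g_0$ is guaranteed by the blow-up of the boundary data (the metric distance to infinity diverges). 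The dichotomy (a)/(b) in part 1 arises according to whether the limiting $u_\infty$ lies in the interior (then one can perturb/continue to get the smooth strict solution of \eqref{054} for the prescribed $\psi$, using the implicit function theorem and the openness afforded by $(1,0,\dots,0) \in \Gamma$, i.e. uniform ellipticity of the $0$-Dirichlet problem) or on the boundary $\partial\Gamma$ (then $u_\infty$ directly solves \eqref{-2'} a.e.); one shows at least one alternative must hold by a connectedness/continuity-in-$\psi$ argument on the family of solutions.

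For part 2, where $(1,0,\dots,0) \in \partial\Gamma$, the equation \eqref{-2'} is only degenerate elliptic, so classical Schauder theory fails and one works in the viscosity framework of Li–Nguyen–Wang / Li. I would run the same exhaustion, but now solving the approximating problems with $(f,\Gamma)$ replaced by a strictly elliptic perturbation $(f^{(\epsilon)}, \Gamma^{(\epsilon)})$ with $(1,0,\dots,0)$ in the interior (e.g. shifting the cone), obtaining $C^\infty_{\operatorname{loc}}$ solutions $u_{j,\epsilon}$ by part 1, with uniform $C^0$ and $C^{0,1}$ bounds (the gradient bound survives since it does not need uniform ellipticity, only $\mu_\Gamma^+ > 1$ and the $C^0$ bound); then let $\epsilon \to 0$ and $j \to \infty$ along a diagonal sequence. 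Stability of viscosity solutions under uniform convergence gives a complete $C^{0,1}_{\operatorname{loc}}$ viscosity solution of \eqref{065}, and the same interior-vs-boundary dichotomy produces alternatives (a)/(b). The main obstacle, in both parts, is establishing the \emph{completeness} of the limiting metric simultaneously with the a priori estimates: one must choose the Dirichlet data on $\partial\Omega_j$ large enough to force divergence of the metric length to infinity in the limit, yet controlled enough (via the comparison principle and the global subsolution on the end) that the interior $C^0$ and hence $C^{1,1}$ estimates stay uniform on each fixed compact set — reconciling these is the technical heart of the argument, and is exactly where the structural hypotheses $f \geq c > 0$ on $M \setminus K_0$ and $\mu_\Gamma^+ > 1$ are used in tandem.
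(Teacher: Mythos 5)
Your overall strategy --- exhaust $M$ by compact domains, solve Dirichlet problems on each, extract local $C^2$ estimates depending only on an upper bound of $u$ (using $\mu_\Gamma^+>1$), apply the maximum principle at interior minima together with $f(\lambda(-g_0^{-1}A_{g_0}))\geq c>0$ on $M\backslash K_0$ to get a global lower bound and hence completeness, and then approximate by trace-modified cones $\Gamma^\tau$ for the degenerate case --- matches the paper's. But there are two deviations that create gaps.

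First, the choice of Dirichlet data. You propose boundary data blowing up on $\partial\Omega_j$ (a Loewner--Nirenberg problem on each $\Omega_j$), intending this to ``force completeness in the limit.'' This intuition does not survive the limit: as $j\to\infty$ the boundary recedes to infinity, so the blow-up of $u_j$ near $\partial\Omega_j$ contributes nothing to the completeness of the limiting metric on a fixed compact set. Completeness must instead come from a global lower bound on the limiting $u$, and that is exactly what the condition $f\geq c>0$ outside $K_0$ buys via the minimum principle --- a point you do mention, but which makes the blow-up data superfluous. The paper simply takes $u_j=0$ on $\partial M_j$ (Theorem A of \cite{DN23} gives existence with no admissibility hypothesis on $g_0$, which is crucial since Theorem \ref{H} only assumes admissibility outside $K_0$), and obtains the local upper bound by comparing $u_j$ with an Aviles--McOwen Loewner--Nirenberg solution on a subdomain, which is uniform in $j$. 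Your approach would also need such a uniform-in-$j$ upper bound, and proving it for the blow-up problems is at least as hard as the zero-data case.

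Second, and more seriously, the mechanism for the dichotomy (a)/(b) is not an argument. You write that one alternative must hold ``by a connectedness/continuity-in-$\psi$ argument'' and that in case (a) one ``perturbs/continues using the implicit function theorem.'' Openness via IFT is plausible, but a continuity method on a non-compact manifold requires a closedness/compactness estimate for the continuity family that you do not supply, and it is not clear how connectedness of the $\psi$-family would yield alternative (b) when the family breaks. The paper's dichotomy is much more concrete and does not use a continuity method at all: after solving \eqref{29} with the prescribed $\psi$ and zero boundary data, either $\inf_{M_1}u_j$ stays bounded along a subsequence, in which case local $C^0$ (hence $C^\infty$) control propagates from $M_1$ by the gradient estimate and one extracts a smooth solution of \eqref{054}, or $\inf_{M_1}u_j\to-\infty$, in which case one normalizes $\widehat u_j = u_j-\inf_{M_1}u_j$ so that the right-hand side $\psi\, e^{2\inf_{M_1}u_j}$ tends to zero, and the $C^{1,\alpha}_{\operatorname{loc}}$ limit of $\widehat u_j$ lands on $\partial\Gamma$ a.e. This normalization step, not an abstract continuity argument, is the source of alternative (b) and is the idea missing from your proposal.
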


\noindent We refer the reader to Section \ref{ex2} for examples of manifolds $(M,g_0)$ with warped product ends satisfying the assumptions of Theorem \ref{H}.

We recall that for a continuous function $u$ on $M$, $g_u = e^{2u}g_0$ is a viscosity subsolution (resp.~viscosity supersolution) to \eqref{054} if for any $x_0\in M$ and $\phi \in C^2(M)$ satisfying $u(x_0) = \phi(x_0)$ and $u(x) \leq \phi(x)$ near $x_0$ (resp.~$u(x) \geq \phi(x)$ near $x_0$), it holds that $\lambda(-g_{\phi}^{-1}A_{g_\phi})\in\{\lambda\in\Gamma: f(\lambda)\geq \psi\}$ (resp.~$\lambda(-g_{\phi}^{-1}A_{g_\phi})\in\mathbb{R}^n\backslash\{\lambda\in\Gamma: f(\lambda)> \psi\}$). We call $g_u$ a viscosity solution if it is both a viscosity subsolution and supersolution. These notions for fully nonlinear Yamabe-type equations were first considered by Li in \cite{Li09}. 

\begin{customrmk}{1}\label{68}
	A key feature of Theorem \ref{H} is that we do not assume $g_0$ satisfies $\lambda(-g_0^{-1}A_{g_0})\in\Gamma$ on all of $M$; this is in contrast to existing literature on fully nonlinear Yamabe-type problems in the negative case on closed manifolds (see e.g.~\cite{GV03b, LN20b}) and non-compact manifolds (see e.g.~\cite{Yuan22, FSY20}). Our proof of Theorem \ref{H} uses an exhaustion argument, which relies on the existence of solutions to corresponding Dirichlet boundary value problems, recently obtained by the first author and Nguyen in \cite{DN23} (see also \cite{GSW11} for related results with the Ricci tensor in place of the Schouten tensor). In \cite{DN23}, there is no assumption on the background metric other than smoothness. On the other hand, the assumption $f(\lambda(-g_0^{-1}A_{g_0})) \geq c>0$ on $M\backslash K_0$ in Theorem \ref{H} is only used to ensure completeness of the limiting metric obtained from the exhaustion argument. We refer the reader to Section \ref{inc} for some further existence results for possibly incomplete metrics under weaker/different assumptions to Theorem \ref{H}. 
\end{customrmk}

\begin{customrmk}{2}\label{83}
	 It would be interesting to determine whether one or both assertions (a) and (b) hold in Theorem \ref{H}, perhaps under more stringent conditions on $g_0$ at infinity. Even in the case of scalar curvature, the assumption that $R_{g_0} \leq -\ep<0$ outside a compact set is insufficient to guarantee the existence of a conformal metric with constant negative scalar curvature, see e.g.~\cite[Example 6.1]{AM88-2}. Following the classical work \cite{ACF92}, in recent works such as \cite{AILA18, HN23}, existence results for constant negative scalar curvature metrics are obtained by imposing weak notions of asymptotic (local) hyperbolicity on the background metric -- see also the references therein for related results.
\end{customrmk}

\begin{customrmk}{3}
	In \cite{DN22}, the first author and Nguyen proved (among other results) the following dichotomy: if $M$ is closed and $\lambda(g_0^{-1}A_{g_0})\in\overline{\Gamma}$ on $M$, then either there exists a smooth metric $g\in[g_0]$ satisfying $\lambda(g^{-1}A_g)\in\Gamma$ on $M$, or there exists a $C^{1,1}$ metric $g\in[g_0]$ satisfying $\lambda(g^{-1}A_g)\in\partial\Gamma$ a.e.~on $M$. These two possibilities are mutually exclusive by the maximum principle, which in general does not apply in the non-compact setting (cf.~Remark \ref{83}).
\end{customrmk}

Let us now explain the condition $(1,0,\dots,0)\in\Gamma$ in Case 1 of Theorem \ref{H} in more detail. It is shown in \cite[Appendix A]{DN22} that if $\Gamma$ satisfies \eqref{21'} and \eqref{22'}, then $(1,0,\dots,0)\in\Gamma$ if and only if $\overline{\Gamma_n^+}\backslash\{0\}\subset\Gamma$, which occurs if and only $\Gamma = (\widetilde{\Gamma})^\tau$ for some $\tau\in(0,1)$ and $\widetilde{\Gamma}$ satisfying \eqref{21'} and \eqref{22'}, where $(\widetilde{\Gamma})^\tau \defeq \{\lambda\in\mathbb{R}^n: \tau\lambda + (1-\tau)\sigma_1(\lambda)e\in\widetilde{\Gamma}\}$ and $e=(1,\dots,1)$. Defining $f^\tau(\lambda) = f(\tau\lambda + (1-\tau)\sigma_1(\lambda)e)$, an equivalent formulation of Case 1 in Theorem \ref{H} is therefore as follows:
	
	\begin{cor}\label{F'}
		Suppose $(f,\Gamma)$ satisfies \eqref{21'}--\eqref{24'}, let $\tau<1$ and suppose $\mu_{\Gamma^\tau}^+>1$. Let $(M^n,g_0)$ be a smooth complete non-compact manifold such that, for some constant $c>0$ and some compact set $K_0\subset M$, it holds that $f^\tau(\lambda(-g_0^{-1}A_{g_0})) \geq c>0$ and $\lambda(-g_0^{-1}A_{g_0})\in\Gamma^\tau$ on $M\backslash K_0$. Then there exists a complete $C^{1,1}_{\operatorname{loc}}$ metric $g\in[g_0]$ satisfying 
			\begin{align*}
			\lambda(-g^{-1}A_g)\in\overline{\Gamma^\tau}
			\end{align*}
			a.e.~on $M$. More precisely, at least one of the following holds: 
			\begin{enumerate}
				\item For any positive $\psi\in C^\infty_{\operatorname{loc}}(M)$ with $\|\ln\psi\|_{L^\infty(M)}<\infty$, there exists a complete $C^\infty_{\operatorname{loc}}$ metric $g\in[g_0]$ satisfying
				\begin{align}\label{54'}
				f^\tau(\lambda(-g^{-1}A_g)) = \psi, \quad \lambda(-g^{-1}A_g)\in\Gamma^\tau \quad \text{on }M.
				\end{align}
				\item There exists a complete $C^{1,1}_{\operatorname{loc}}$ metric $g\in[g_0]$ satisfying 
				\begin{align*}
				\lambda(-g^{-1}A_g)\in\partial\Gamma^\tau \quad \text{a.e. on }M.
				\end{align*}
			\end{enumerate}
	\end{cor}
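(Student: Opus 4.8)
The plan is to obtain Corollary~\ref{F'} as an immediate consequence of Case~1 of Theorem~\ref{H}, applied with the pair $(f^\tau,\Gamma^\tau)$ in place of $(f,\Gamma)$. The only thing that really needs to be done is to confirm that $(f^\tau,\Gamma^\tau)$ again satisfies the structural hypotheses \eqref{21'}--\eqref{24'} and, crucially, the condition $(1,0,\dots,0)\in\Gamma^\tau$ that singles out Case~1. Granting this, the hypotheses imposed on $g_0$ in Corollary~\ref{F'} (namely $f^\tau(\lambda(-g_0^{-1}A_{g_0}))\geq c$ and $\lambda(-g_0^{-1}A_{g_0})\in\Gamma^\tau$ off a compact set) are word-for-word those of Theorem~\ref{H} with $(f,\Gamma)$ replaced by $(f^\tau,\Gamma^\tau)$, and the conclusion of Corollary~\ref{F'}, including the dichotomy between items~1 and~2 and the equation \eqref{54'}, is word-for-word the conclusion of Case~1 of Theorem~\ref{H} for that pair. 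So the proof reduces to a purely structural verification.

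For that verification, write $L_\tau(\lambda)=\tau\lambda+(1-\tau)\sigma_1(\lambda)e$, a symmetric linear map commuting with coordinate permutations, with eigenvalue $\tau$ on $e^\perp$ and $n-(n-1)\tau$ on $\mathbb{R}e$; since $\tau<1$ we have $n-(n-1)\tau>0$, so $L_\tau$ is a linear isomorphism whenever $\tau\neq0$ (the exceptional value $\tau=0$, for which $\Gamma^0=\Gamma_1^+$ and $f^0=f(e)\,\sigma_1$, is admissible by inspection). By definition $\Gamma^\tau=L_\tau^{-1}(\Gamma)$ and $f^\tau=f\circ L_\tau$. Since $L_\tau$ is linear and $S_n$-equivariant, $\Gamma^\tau$ is an open, convex, connected, symmetric cone with vertex at $0$ (giving \eqref{21'}), $f^\tau$ is concave, $1$-homogeneous, symmetric, and lies in $C^\infty(\Gamma^\tau)\cap C^0(\overline{\Gamma^\tau})$ (giving \eqref{23'}), and since $L_\tau$ is a homeomorphism carrying $\Gamma^\tau$ onto $\Gamma$ and $\partial\Gamma^\tau$ onto $\partial\Gamma$, we get $f^\tau>0$ in $\Gamma^\tau$ and $f^\tau=0$ on $\partial\Gamma^\tau$. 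The inclusions $\Gamma_n^+\subseteq\Gamma^\tau\subseteq\Gamma_1^+$ follow from the identities $(L_\tau\lambda)_i=\lambda_i+(1-\tau)\sum_{j\neq i}\lambda_j$ and $\sum_i(L_\tau\lambda)_i=(n-(n-1)\tau)\sigma_1(\lambda)$ together with $\tau<1$ (these are also recorded in \cite[Appendix~A]{DN22}), giving \eqref{22'}. The one point worth displaying is ellipticity: differentiating through $L_\tau$ and using $\partial_{\lambda_i}(L_\tau\lambda)_j=\tau\delta_{ij}+(1-\tau)$ yields, for $\lambda\in\Gamma^\tau$,
\[
f^\tau_{\lambda_i}(\lambda)=f_{\lambda_i}(L_\tau\lambda)+(1-\tau)\sum_{j\neq i}f_{\lambda_j}(L_\tau\lambda)>0,
\]
since $L_\tau\lambda\in\Gamma$, every $f_{\lambda_j}(L_\tau\lambda)>0$ by \eqref{24'}, and $1-\tau>0$; this completes \eqref{24'}. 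Finally $L_\tau(1,0,\dots,0)=(1,1-\tau,\dots,1-\tau)\in\Gamma_n^+\subseteq\Gamma$ because $1-\tau>0$, so $(1,0,\dots,0)\in\Gamma^\tau$. Together with the standing assumption $\mu_{\Gamma^\tau}^+>1$, this shows $(f^\tau,\Gamma^\tau)$ meets all hypotheses of Case~1 of Theorem~\ref{H}, and applying that result proves the Corollary.

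I do not expect a genuine obstacle here: the analytic content lies entirely in Theorem~\ref{H} (and, through it, in the Dirichlet theory of \cite{DN23}), and the reduction is just bookkeeping. The only subtlety is making sure the structural axioms — especially the strict ellipticity $f^\tau_{\lambda_i}>0$ and the cone inclusions — persist for every $\tau<1$ rather than merely for $\tau\in(0,1)$, which is exactly what the displayed identity for $f^\tau_{\lambda_i}$ and the two $L_\tau$-identities above settle. I would also add one sentence on the equivalence asserted before the statement: conversely, when $(1,0,\dots,0)\in\Gamma$ the characterisation of \cite[Appendix~A]{DN22} writes $\Gamma=\widetilde\Gamma^\tau$ for some $\tau\in(0,1)$ and $\widetilde\Gamma$ satisfying \eqref{21'}--\eqref{22'}, and with $\widetilde f=f\circ L_\tau^{-1}$ one has $\widetilde f^{\,\tau}=f$, so Corollary~\ref{F'} applied to $(\widetilde f,\widetilde\Gamma)$ returns Case~1 of Theorem~\ref{H} for $(f,\Gamma)$.
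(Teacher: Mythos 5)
Your proposal is correct and follows exactly the paper's intended route: Corollary~\ref{F'} is presented there as the reformulation of Case~1 of Theorem~\ref{H} under the substitution $(f,\Gamma)\mapsto(f^\tau,\Gamma^\tau)$, relying on the equivalence from \cite[Appendix~A]{DN22}, and your verification of \eqref{21'}--\eqref{24'} and of $(1,0,\dots,0)\in\Gamma^\tau$ for the pair $(f^\tau,\Gamma^\tau)$ is precisely the bookkeeping the paper leaves implicit. The computations (the two eigenvalues of $L_\tau$, the identity $f^\tau_{\lambda_i}=f_{\lambda_i}(L_\tau\lambda)+(1-\tau)\sum_{j\neq i}f_{\lambda_j}(L_\tau\lambda)$, and the separate treatment of $\tau=0$) all check out.
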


\noindent We next observe that if $t\in(-\infty,1)$ and 
	\begin{align*}
	A_g^t \defeq \frac{1}{n-2}\bigg(\operatorname{Ric}_g - \frac{t}{2(n-1)}R_g g\bigg)
	\end{align*}
	denotes the so-called trace-modified Schouten tensor, then $A_g^t = \tau^{-1}[\tau A_g + (1-\tau)\sigma_1(g^{-1}A_g)g]$ for $\tau = (1+\frac{1-t}{n-2})^{-1}\in(0,1)$. Therefore, Corollary \ref{F'} encompasses equations involving the trace-modified Schouten tensor. A particular example of interest is when $t=0$, or equivalently $\tau = \tau_0 \defeq \frac{n-2}{n-1}$, in which case $A_g^0$ is a positive multiple of the Ricci tensor. It is routine to verify that $\mu_{\Gamma^{\tau_0}}^+ > 1$ if and only if $\Gamma\not=\Gamma_n^+$, and thus we immediately obtain from Corollary \ref{F'} the following: 
	
	\begin{cor}\label{F''}
		Suppose that $(f,\Gamma)$ satisfies \eqref{21'}--\eqref{24'} and $\Gamma\not=\Gamma_n^+$. Let $(M^n,g_0)$ be a smooth complete non-compact manifold such that, for some constant $c>0$ and some compact set $K_0\subset M$, it holds that $f(\lambda(-g_0^{-1}\operatorname{Ric}_{g_0})) \geq c>0$ and $\lambda(-g_0^{-1}\operatorname{Ric}_{g_0})\in\Gamma$ on $M\backslash K_0$. Then there exists a complete $C^{1,1}_{\operatorname{loc}}$ metric $g\in[g_0]$ satisfying 
		\begin{align*}
		\lambda(-g^{-1}\operatorname{Ric}_g)\in\overline{\Gamma}
		\end{align*}
		a.e.~on $M$. More precisely, at least one of the following holds: 
		\begin{enumerate}
			\item For any positive $\psi\in C^\infty_{\operatorname{loc}}(M)$ with $\|\ln\psi\|_{L^\infty(M)}<\infty$, there exists a complete $C^\infty_{\operatorname{loc}}$ metric $g\in[g_0]$ satisfying
			\begin{align*}
			f(\lambda(-g^{-1}\operatorname{Ric}_g)) = \psi, \quad \lambda(-g^{-1}\operatorname{Ric}_g)\in\Gamma \quad \text{on }M.
			\end{align*}
			\item There exists a complete $C^{1,1}_{\operatorname{loc}}$ metric $g\in[g_0]$ satisfying 
			\begin{align*}
			\lambda(-g^{-1}\operatorname{Ric}_g)\in\partial\Gamma \quad \text{a.e. on }M.
			\end{align*}
		\end{enumerate}
	\end{cor}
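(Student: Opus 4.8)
The plan is to derive Corollary \ref{F''} directly from Corollary \ref{F'} by choosing the parameter $\tau$ so that the trace-modified Schouten tensor $A_g^t$ becomes a positive multiple of $\operatorname{Ric}_g$. Concretely, take $t = 0$, so that $A_g^0 = \frac{1}{n-2}\operatorname{Ric}_g$, and set $\tau = \tau_0 \defeq \frac{n-2}{n-1}$ as recorded in the excerpt, which indeed lies in $(0,1)$. With this choice, the identity $A_g^t = \tau^{-1}[\tau A_g + (1-\tau)\sigma_1(g^{-1}A_g)g]$ gives $g^{-1}A_g^0 = \tau_0^{-1}\big(\tau_0\, g^{-1}A_g + (1-\tau_0)\sigma_1(g^{-1}A_g)\,\mathrm{Id}\big)$, so that the eigenvalues satisfy $\lambda(g^{-1}A_g^0) = \tau_0^{-1}\big(\tau_0 \lambda(g^{-1}A_g) + (1-\tau_0)\sigma_1(\lambda(g^{-1}A_g))e\big)$. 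Since $\frac{1}{n-2}\lambda(g^{-1}\operatorname{Ric}_g) = \lambda(g^{-1}A_g^0)$ and $f$ is $1$-homogeneous, the quantities appearing in Corollary \ref{F'} translate as $\lambda(-g^{-1}A_g)\in\Gamma^{\tau_0}$ $\Longleftrightarrow$ $\tau_0\lambda(-g^{-1}A_g) + (1-\tau_0)\sigma_1(\lambda(-g^{-1}A_g))e\in\Gamma$ $\Longleftrightarrow$ $\lambda(-g^{-1}\operatorname{Ric}_g)\in\Gamma$ (using that $\Gamma$ is a cone), and similarly $f^{\tau_0}(\lambda(-g^{-1}A_g)) = f(\tau_0\lambda(-g^{-1}A_g)+(1-\tau_0)\sigma_1(\lambda(-g^{-1}A_g))e) = (n-2)^{-1}f(\lambda(-g^{-1}\operatorname{Ric}_g))$ up to the harmless positive constant $(n-2)^{-1}$, which can be absorbed into $c$ and $\psi$ (note $\|\ln\psi\|_{L^\infty}<\infty$ is preserved under multiplication by a positive constant). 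The same replacement identifies $\partial\Gamma^{\tau_0}$ with $\partial\Gamma$ under the linear isomorphism $\lambda\mapsto \tau_0\lambda + (1-\tau_0)\sigma_1(\lambda)e$.

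Next I would verify the one remaining hypothesis of Corollary \ref{F'}, namely $\mu_{\Gamma^{\tau_0}}^+ > 1$, which by the excerpt is equivalent to $\Gamma \neq \Gamma_n^+$. This is the ``routine to verify'' claim stated just before Corollary \ref{F''}: one computes that $(-\mu,1,\dots,1) \in \partial\Gamma^{\tau_0}$ iff $\tau_0(-\mu,1,\dots,1) + (1-\tau_0)(n-1-\mu)e \in \partial\Gamma$, i.e. the vector with first entry $-\tau_0\mu + (1-\tau_0)(n-1-\mu)$ and remaining entries $\tau_0 + (1-\tau_0)(n-1-\mu)$; normalizing the last $n-1$ entries to $1$ and comparing with the defining condition $(-\mu_\Gamma^+,1,\dots,1)\in\partial\Gamma$ yields a strictly decreasing affine relation between $\mu_{\Gamma^{\tau_0}}^+$ and $\mu_\Gamma^+$, from which $\mu_{\Gamma^{\tau_0}}^+ > 1 \Longleftrightarrow \mu_\Gamma^+ > 0 \Longleftrightarrow \Gamma \neq \Gamma_n^+$. (That $\mu_\Gamma^+ = 0$ precisely when $\Gamma = \Gamma_n^+$ follows from \eqref{22'} and the definition of $\mu_\Gamma^+$.) Once this is in hand, Corollary \ref{F'} applied with $\tau = \tau_0$ produces a complete $C^{1,1}_{\operatorname{loc}}$ metric $g\in[g_0]$ with $\lambda(-g^{-1}A_g)\in\overline{\Gamma^{\tau_0}}$ a.e., and the dichotomy (a)/(b) of Corollary \ref{F'}, after the translation above, becomes exactly the dichotomy stated in Corollary \ref{F''}, with $\lambda(-g^{-1}\operatorname{Ric}_g)$ in place of $\lambda(-g^{-1}A_g^{\tau_0})$.

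Finally, I would take care that the conclusion is phrased in terms of $\operatorname{Ric}_g$ rather than $A_g^0$. Since $\lambda(-g^{-1}\operatorname{Ric}_g) = (n-2)\,\lambda(-g^{-1}A_g^0)$ and $(n-2)>0$, membership in $\Gamma$, $\overline{\Gamma}$ or $\partial\Gamma$ is unaffected (these are cones); and $f(\lambda(-g^{-1}\operatorname{Ric}_g)) = (n-2)f(\lambda(-g^{-1}A_g^0))$, so prescribing $f(\lambda(-g^{-1}A_g^0))$ equal to a positive function $\widetilde\psi$ with $\|\ln\widetilde\psi\|_{L^\infty}<\infty$ is equivalent to prescribing $f(\lambda(-g^{-1}\operatorname{Ric}_g)) = (n-2)\widetilde\psi =: \psi$, and the class of such $\psi$ is exactly the class of positive $\psi\in C^\infty_{\operatorname{loc}}(M)$ with $\|\ln\psi\|_{L^\infty}<\infty$. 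Assembling these bookkeeping steps completes the proof.

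I do not anticipate a serious obstacle here: the entire argument is a change of variables coupled with the already-proven Corollary \ref{F'}, and the content is purely the linear-algebraic translation between $A_g$, the trace-modified tensor $A_g^{\tau_0}$, and $\operatorname{Ric}_g$. The only point requiring a small computation — and hence the ``main obstacle'' in a very mild sense — is confirming the equivalence $\mu_{\Gamma^{\tau_0}}^+ > 1 \Longleftrightarrow \Gamma \neq \Gamma_n^+$, i.e. tracking how $\mu_\Gamma^+$ transforms under $\Gamma \mapsto \Gamma^\tau$; everything else is a direct substitution.
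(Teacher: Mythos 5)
Your proposal is correct and follows exactly the route the paper intends: Corollary~\ref{F''} is obtained from Corollary~\ref{F'} by setting $\tau=\tau_0=\frac{n-2}{n-1}$, translating between $\lambda(-g^{-1}A_g)$, $\lambda(-g^{-1}A^0_g)$ and $\lambda(-g^{-1}\operatorname{Ric}_g)$ via homogeneity and the cone property, and verifying $\mu_{\Gamma^{\tau_0}}^+>1\Leftrightarrow\Gamma\neq\Gamma_n^+$. One small correction to the ``routine'' computation: the relation is not affine but a (strictly \emph{increasing}) M\"obius map, namely $\mu_{\Gamma^{\tau_0}}^+=\frac{(n-1)+(2n-3)\mu_\Gamma^+}{(n-1)+\mu_\Gamma^+}$, which sends $\mu_\Gamma^+=0$ to $1$ and is increasing, so the equivalence you state still holds; similarly the harmless constant you absorb is $\tfrac{1}{n-1}$ (from $\tau_0\cdot\tfrac{1}{n-2}$), not $\tfrac{1}{n-2}$ -- neither slip affects the argument.
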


\begin{customrmk}{4}\label{69}
The fact that we cannot assert better than Lipschitz regularity in Case 2 of Theorem \ref{H} is related to the lack of a known \textit{a priori} second derivative estimate on solutions to \eqref{054} when $(1,0,\dots,0)\in\partial\Gamma$. In general, this Lipschitz regularity cannot be improved to $C^1$ regularity -- see the counterexamples of Li \& Nguyen \cite{LN20b} and Li, Nguyen \& Xiong \cite{LNX22}. On the other hand, the trace-modified equation \eqref{54'} admits \textit{a priori} second derivative estimates when $\tau<1$ which depend on the $C^1$ norm of the conformal factor -- see Gursky \& Viaclovsky \cite{GV03b} for the case $(f,\Gamma) = (\sigma_k^{1/k},\Gamma_k^+)$, and Guan \cite{Guan08} for the general case. In upcoming work of Chu, Li \& Li \cite{CLL23-2}, local interior second derivative estimates depending only on gradient bounds and one-sided $C^0$ bounds are obtained for $\tau<1$. These local estimates will be used in our proof of Theorem \ref{H}. 
\end{customrmk}

We now turn to the positive cases \eqref{-1} and \eqref{-1'}. Our main result in this direction is as follows:

\begin{thm}\label{A'}
	Suppose $(f,\Gamma)$ satisfies \eqref{21'}--\eqref{24'}, and let $(M^n,g_0)$ be a smooth complete non-compact manifold such that $\lambda(g_0^{-1}A_{g_0})\in\Gamma$ on $M$. Suppose further that there exists a smooth complete conformal metric $g_v = e^{2v}g_0$ such that $\sup_M v<\infty$ and $R_{g_v} \leq 0$ on $M$. Then
	\begin{enumerate}
		\item If $\inf_M f(\lambda(g_0^{-1}A_{g_0})) = 0$, there exists a complete $C^{1,1}_{\operatorname{loc}}$ metric $g\in[g_0]$ satisfying \eqref{-1'} a.e.
		
		\item If $\inf_M f(\lambda(g_0^{-1}A_{g_0})) >0$, then for any positive $\psi\in C^\infty_{\operatorname{loc}}(M)$ with $\|\ln\psi\|_{L^\infty(M)}<\infty$, there exists a complete $C^\infty_{\operatorname{loc}}$ metric $g\in[g_0]$ satisfying 
		\begin{align}\label{81}
		f(\lambda(g^{-1}A_g)) = \psi, \quad \lambda(g^{-1}A_g)\in\Gamma \quad \text{on }M. 
		\end{align}
		\end{enumerate}
\end{thm}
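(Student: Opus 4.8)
The plan is to run the exhaustion scheme indicated in Remark \ref{68} for Theorem \ref{H}. Fix a smooth exhaustion $\Omega_1 \Subset \Omega_2 \Subset \cdots$ of $M$ by relatively compact domains; on each $\Omega_i$ one solves a Dirichlet problem for the positive-case equation (appealing to the solvability results underlying \cite{DN23}, given an admissible subsolution with the prescribed boundary values), extracts uniform-on-compacta a priori estimates, and passes to the limit. The two given metrics provide complementary barriers. On the one hand, since $\lambda(g_0^{-1}A_{g_0})\in\Gamma$ and $f$ is $1$-homogeneous with $f>0$ in $\Gamma$, for a constant $c$ small enough the rescaled metric $e^{2c}g_0$ satisfies $\lambda((e^{2c}g_0)^{-1}A_{e^{2c}g_0})=e^{-2c}\lambda(g_0^{-1}A_{g_0})\in\Gamma$ and $f(\lambda((e^{2c}g_0)^{-1}A_{e^{2c}g_0}))=e^{-2c}f(\lambda(g_0^{-1}A_{g_0}))\ge\psi$, i.e.\ the constant $c$ is an admissible subsolution. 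On the other hand, $g_v$ is a (viscosity) supersolution everywhere: $R_{g_v}\le 0$ forces $\operatorname{tr}_{g_0}A_{g_v}=e^{2v}\tfrac{R_{g_v}}{2(n-1)}\le 0$, hence $\lambda(g_v^{-1}A_{g_v})\notin\Gamma_1^+\supseteq\Gamma$, so in particular $\lambda(g_v^{-1}A_{g_v})\notin\{\lambda\in\Gamma:f(\lambda)>\psi\}$.

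For Case 2, set $m:=\inf_M f(\lambda(g_0^{-1}A_{g_0}))>0$ and $\Psi:=\|\psi\|_{L^\infty(M)}<\infty$; then $c_0:=\tfrac12\ln(m/\Psi)$ yields a \emph{global} admissible subsolution $e^{2c_0}g_0$. Replacing $g_v$ by $e^{-2a}g_v$ for $a$ large (which preserves completeness of $g_v$ and the inequality $R_{g_v}\le 0$), I may assume $\sup_M v\le c_0$. On $\Omega_i$ I would solve
\begin{equation*}
f(\lambda(g_{u_i}^{-1}A_{g_{u_i}}))=\psi,\quad \lambda(g_{u_i}^{-1}A_{g_{u_i}})\in\Gamma\ \text{ in }\Omega_i,\qquad u_i=c_0\ \text{ on }\partial\Omega_i.
\end{equation*}
The lower bound $u_i\ge v$ on $\Omega_i$ (uniform in $i$) follows from a maximum principle: at an interior minimum $x_0$ of $u_i-v$ one has $\nabla u_i(x_0)=\nabla v(x_0)$ and $\nabla^2 u_i(x_0)\ge \nabla^2 v(x_0)$, so by the transformation law for the Schouten tensor $A_{g_{u_i}}(x_0)\le A_{g_v}(x_0)$ and hence $\operatorname{tr}_{g_0}A_{g_{u_i}}(x_0)\le \operatorname{tr}_{g_0}A_{g_v}(x_0)\le 0$; but $\lambda(g_{u_i}^{-1}A_{g_{u_i}})(x_0)\in\Gamma\subseteq\Gamma_1^+$ forces $\operatorname{tr}_{g_0}A_{g_{u_i}}(x_0)>0$, a contradiction, so the minimum is attained on $\partial\Omega_i$, where $u_i-v=c_0-v\ge 0$. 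The \emph{upper} bound $u_i\le C$ with $C$ independent of $i$ is the step I expect to be the genuine obstacle: unlike in the negative case, $f(\lambda(g_u^{-1}A_{g_u}))=\psi$ with $\lambda\in\Gamma$ does not obey a naive maximum principle (cf.\ bubbling in the classical Yamabe problem), so this requires the $C^0$ estimates for the positive-case Dirichlet problem together with a uniform control of their dependence on $\Omega_i$, and it is precisely to supply this compactness that the hypotheses on $g_v$ (in particular $\sup_M v<\infty$) are imposed. Granting $v\le u_i\le C$, the interior estimates for admissible solutions with $\psi$ bounded away from $0$ (see e.g.\ \cite{CLL23-2}) give uniform $C^k_{\mathrm{loc}}$ bounds; a diagonal argument yields $u_i\to u$ in $C^\infty_{\mathrm{loc}}(M)$ with $f(\lambda(g_u^{-1}A_{g_u}))=\psi$ and $\lambda(g_u^{-1}A_{g_u})\in\overline\Gamma$, the inclusion being strict since $f=0$ on $\partial\Gamma$ while $\psi>0$. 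Finally $u\ge v$ gives $g=e^{2u}g_0\ge e^{2v}g_0=g_v$, so $g$ is complete because $g_v$ is.

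For Case 1, where $\inf_M f(\lambda(g_0^{-1}A_{g_0}))=0$, there is no global constant subsolution, but $\inf_{\overline{\Omega_i}}f(\lambda(g_0^{-1}A_{g_0}))>0$ for each $i$ since $\lambda(g_0^{-1}A_{g_0})\in\Gamma$ on $M$. I would therefore run the same argument on $\Omega_i$ for the equations $f(\lambda(g_{u_i}^{-1}A_{g_{u_i}}))=\psi_i$ with $\psi_i\searrow 0$ chosen so small that on $\overline{\Omega_i}$ a constant subsolution bounded independently of $i$ and still $\ge\sup_M v$ remains available (so the boundary data may again be taken independent of $i$, and the lower bound $u_i\ge v$ persists). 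The degeneracy as $\psi_i\to 0$ is absorbed by invoking uniform interior $C^{1,1}$ estimates of the type used in \cite{DN22} in place of higher-order bounds; extracting a limit produces a complete $C^{1,1}_{\mathrm{loc}}$ metric $g\in[g_0]$ with $f(\lambda(g^{-1}A_g))=0$, i.e.\ $\lambda(g^{-1}A_g)\in\partial\Gamma$, a.e.\ on $M$, completeness again following from $g\ge g_v$.
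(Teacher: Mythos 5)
Your overall scheme matches the paper's: exhaustion by compact domains, a Dirichlet problem on each piece with constant boundary data, a comparison argument with $g_v$ for a uniform lower bound, interior estimates, a diagonal subsequence, and completeness inherited from $g_v$. The lower bound you give (Schouten tensor comparison at an interior minimum of $u_i-v$, forcing $\operatorname{tr}_{g_0}A_{g_{u_i}}\leq 0$, contradicting $\Gamma\subseteq\Gamma_1^+$) is a correct nonlinear version of the paper's argument, which instead passes to $w_j=e^{\frac{n-2}{2}u_j}$ and $\widetilde v=e^{\frac{n-2}{2}v}$ and applies the linear maximum principle for the conformal Laplacian. Those are the same idea in different dress, and your completeness conclusion $g\geq g_v$ is exactly the paper's.

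The genuine gap is the upper bound, which you explicitly leave open and then mis-attribute. You are right that the positive-case equation obeys no naive maximum principle from above, and that one cannot simply read off $u_i\leq C$ from the PDE at an interior maximum; but the resolution is not, as you suggest, a delicate $C^0$ estimate ``supplied by the hypotheses on $g_v$.'' Rather, the upper bound is built directly into the Dirichlet existence theorem the paper uses (Guan \cite{Gua07}, Theorem~\ref{EE}): given a subsolution $\bar u$ agreeing with the boundary data, that theorem produces a solution $u\leq\bar u$. In the paper the subsolution is the constant $\Lambda=\sup_M v$, so the solutions $u_j\leq\Lambda$ automatically, and $\sup_M v<\infty$ is used to make $\Lambda$ finite and to link it to the lower barrier $v$ -- not to prove an independent $C^0$ bound. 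In your set-up the same mechanism would give $u_i\leq c_0$, since $e^{2c_0}g_0$ is your subsolution. Note also that the Dirichlet solvability you invoke, \cite{DN23}, is the negative-case result (Theorem~\ref{AA}); the positive case needs \cite{Gua07}. With Guan's theorem in place of your unresolved $C^0$ step and of the incorrect citation, your argument becomes essentially the paper's.

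A minor further point in Case 1: you should make the choice of $\psi_i$ concrete (e.g., $\psi_i=\min_{\overline{\Omega_i}}f(\lambda(g_\Lambda^{-1}A_{g_\Lambda}))$ as in the paper) so that the fixed constant boundary/subsolution value $\Lambda$ works for every $i$, rather than leaving it at ``chosen so small.'' The local $C^{1,1}$ estimates you invoke are in the paper supplied by Theorem~\ref{FF} (Chen \cite{Che05}), not the negative-case \cite{CLL23-2}.
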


\begin{customrmk}{5}
	Note that in contrast to Theorem \ref{H}, we do make a global admissibility assumption on the background metric in Theorem \ref{A'}, namely that $\lambda(g_0^{-1}A_{g_0})\in\Gamma$ on $M$ (cf.~Remark \ref{68}). The reason for this is as follows: as in the proof of Theorem \ref{H}, the proof of Theorem \ref{A'} uses an exhaustion argument which relies on the existence of solutions to suitable Dirichlet boundary value problems on compact manifolds with boundary; such solutions were obtained by Guan \cite{Gua07} under the assumption that the background metric satisfies $\lambda(g_0^{-1}A_{g_0})\in\Gamma$. It would be interesting to determine whether Case 1 of Theorem \ref{A'} holds assuming only $\lambda(g_0^{-1}A_{g_0})\in\overline{\Gamma}$ on $M$.
\end{customrmk}

One setting in which Theorem \ref{A'} applies is as follows. Suppose that $(M,g_0)$ is asymptotically flat (of suitable order and regularity to be specified below) and $\lambda(g_0^{-1}A_{g_0})\in\Gamma$ on $M$. In particular, $R_{g_0}>0$ and thus the work of Maxwell \cite[Proposition 3]{Max05} implies the existence of an asymptotically flat, scalar flat conformal metric $g_{v}$. Moreover, since concavity and homogeneity of $f$ imply 
\begin{align}\label{10}
f(\lambda) \leq f\bigg(\frac{\sigma_1(\lambda)}{n}e\bigg) + \nabla f\bigg(\frac{\sigma_1(\lambda)}{n}e\bigg)\cdot\bigg(\lambda - \frac{\sigma_1(\lambda)}{n}e\bigg) = \frac{f(e)}{n}\sigma_1(\lambda) \quad \text{if }\lambda\in\Gamma,
\end{align}
and since the scalar curvature of an asymptotically flat metric tends to zero at infinity, our assumptions on $g_0$ therefore imply $\inf_M f(\lambda(g_0^{-1}A_{g_0})) = 0$. Therefore, the hypotheses in Case 1 of Theorem \ref{A'} are satisfied. In fact, by a small refinement to the proof of Theorem \ref{A'}, we can obtain an asymptotically flat solution to \eqref{-1'} in this case, at least in the $C^0$ sense. We refer the reader to Section \ref{ex} for the relevant notation and definitions used in the following:

\begin{cor}\label{A}
	Suppose $\Gamma$ satisfies \eqref{21'} and \eqref{22'}, and let $(M^n,g_0)$ be a smooth complete $W^{2,p}_{-\tau}$-asymptotically flat manifold satisfying $\lambda(g_0^{-1}A_{g_0})\in\Gamma$ on $M^n$, where $\tau\in(0,n-2)$ and $p>\frac{n}{2}$. Then there exists a $C^{1,1}_{\operatorname{loc}}$ metric $g\in[g_0]$ which is $C^0_{-\tau}$-asymptotically flat and satisfies \eqref{-1'} a.e.
\end{cor}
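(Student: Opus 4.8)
The plan is to obtain Corollary~\ref{A} from Case~1 of Theorem~\ref{A'}, after first checking its hypotheses for a $W^{2,p}_{-\tau}$-asymptotically flat $(M,g_0)$, and then to upgrade the resulting metric to a $C^0_{-\tau}$-asymptotically flat one by a refinement of the exhaustion argument underlying Theorem~\ref{A'}. Since the conclusion \eqref{-1'} that $\lambda(g^{-1}A_g)\in\partial\Gamma$ does not involve $f$, I would first fix any $f$ making $(f,\Gamma)$ satisfy \eqref{21'}--\eqref{24'}. From $\lambda(g_0^{-1}A_{g_0})\in\Gamma\subseteq\Gamma_1^+$ one gets $R_{g_0}=2(n-1)\,\mathrm{tr}_{g_0}A_{g_0}>0$, so \cite[Proposition 3]{Max05} supplies a smooth, complete, scalar-flat, $W^{2,p}_{-\tau}$-asymptotically flat conformal metric $g_v=e^{2v}g_0$; since $p>\tfrac n2$, the weighted Sobolev embedding $W^{2,p}_{-\tau}\hookrightarrow C^0_{-\tau}$ gives $v\in C^0_{-\tau}(M)$, in particular $\sup_M v<\infty$ and $v\to 0$ at infinity, and the maximum principle for the conformal Laplacian forces $v\le 0$. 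Next, by \eqref{10} and the decay $R_{g_0}\to 0$ at infinity, $0<f(\lambda(g_0^{-1}A_{g_0}))\le\tfrac{f(e)}{2n(n-1)}R_{g_0}\to 0$, so $\inf_M f(\lambda(g_0^{-1}A_{g_0}))=0$. Thus Case~1 of Theorem~\ref{A'} applies and yields a complete $C^{1,1}_{\mathrm{loc}}$ metric $g=e^{2u}g_0\in[g_0]$ with $\lambda(g^{-1}A_g)\in\partial\Gamma$ a.e.

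It remains to show that $g$ can be taken $C^0_{-\tau}$-asymptotically flat, i.e. that $u=O(|x|^{-\tau})$ at infinity. Here $g$ is a locally uniform limit of solutions $u_k$ of Dirichlet problems $f(\lambda(g_{u_k}^{-1}A_{g_{u_k}}))=\epsilon_k$ on an exhaustion $\Omega_k\uparrow M$ with boundary data $u_k=0$ and $\epsilon_k\downarrow 0$ (solvable by Guan's theorem \cite{Gua07}, since $g_0$ is admissible). For the lower bound I would compare $u_k$ with $v$: since $R_{g_v}=0$ while $R_{g_{u_k}}>0$ by admissibility, the function $v-u_k$ has no interior maximum on $\Omega_k$ (at such a point the transformation law for $R$ under $g_v=e^{2(v-u_k)}g_{u_k}$ would give $0=R_{g_v}\ge e^{-2(v-u_k)}R_{g_{u_k}}>0$), so $\max_{\overline\Omega_k}(v-u_k)=\max_{\partial\Omega_k}v\le 0$; hence $u_k\ge v$ on $\Omega_k$ and, in the limit, $u\ge v$ on $M$, which already gives $\liminf_{|x|\to\infty}u\ge 0$ and $u\ge -C|x|^{-\tau}$ near infinity. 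For the matching upper bound I would use that $\lambda(g^{-1}A_g)\in\partial\Gamma\subseteq\overline{\Gamma_1^+}$ forces $R_g\ge 0$, i.e. $\Delta_{g_0}u\le\tfrac{R_{g_0}}{2(n-1)}-\tfrac{n-2}{2}|\nabla_{g_0}u|^2\le\tfrac{R_{g_0}}{2(n-1)}$ a.e.; since $R_{g_0}\in L^p_{-\tau-2}$ and $\tau\in(0,n-2)$, solving $\Delta_{g_0}h=\tfrac{R_{g_0}}{2(n-1)}$ with $h\in W^{2,p}_{-\tau}\hookrightarrow C^0_{-\tau}$ makes $u-h$ a $g_0$-superharmonic function near infinity that is bounded below by $v-h=O(|x|^{-\tau})$; combined with the (local, boundary-data-independent) interior upper bounds on the $u_k$ coming from \eqref{10} and Keller--Osserman-type estimates for the positive-case equation, together with the fact that the asymptotically flat structure makes $g_0$ an ever-better approximate solution of \eqref{-1'} near infinity, one concludes $u-h\to 0$ at rate $O(|x|^{-\tau})$ and hence $u=O(|x|^{-\tau})$, so that $g$ is $C^0_{-\tau}$-asymptotically flat.

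The main obstacle is the upper bound on $u$ at infinity. In the positive case there is no comparison principle of the usual type -- the equation is degenerate elliptic and is not proper in the favourable direction -- so one cannot simply dominate $u$ from above by a decaying supersolution; instead one must combine the superharmonicity of $u-h$ and the lower barrier $v$ with quantitative interior estimates whose dependence on $\epsilon_k$ and on the local geometry is tracked carefully near infinity. This is where the specific structure of asymptotically flat ends (fast decay of $R_{g_0}$, near-Euclidean geometry at large scales) together with a judicious choice of the exhausting domains $\Omega_k$ (and, if needed, of the boundary data) is essential, and it is the content of the "small refinement" to the proof of Theorem~\ref{A'} alluded to above.
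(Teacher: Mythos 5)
Your verification of the hypotheses of Theorem~\ref{A'} and your derivation of the lower barrier $u\ge v$ (via the conformal Laplacian maximum principle) are both correct and match the paper. The gap is precisely where you say it is — the upper bound on $u$ — but you have misdiagnosed its difficulty. You write that ``one cannot simply dominate $u$ from above by a decaying supersolution,'' and you propose an incomplete argument via superharmonicity of $u-h$, Keller--Osserman estimates, and asymptotics ``tracked carefully near infinity.'' None of that is needed, and none of it is fleshed out into an actual proof. The upper bound is already built into the existence theorem you are invoking: Theorem~\ref{EE} (Guan \cite{Gua07}) produces, on each exhausting domain, a solution $u_j$ lying \emph{below} whatever smooth supersolution $\bar u$ one feeds it with matching Dirichlet data. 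Since $\lambda(g_0^{-1}A_{g_0})\in\Gamma$ with $f(\lambda(g_0^{-1}A_{g_0}))>0$, any \emph{constant} $\bar u = \tfrac{2}{n-2}\ln\beta$ (equivalently, the constant conformal factor $\beta$) with $\beta>1$ is a supersolution of $f(\lambda(\cdot))\ge \epsilon_j$ once $\epsilon_j>0$ is chosen small enough, and one has $\Phi\le\beta$ on $M$ by the maximum principle for the conformal Laplacian. Running the exhaustion of Theorem~\ref{A'} with this $\beta$ in place of $e^{\frac{n-2}{2}\Lambda}$ therefore yields $\Phi\le w_j\le\beta$ at every stage (with $w_j=e^{\frac{n-2}{2}u_j}$). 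Because the local $C^2$ estimates of Theorem~\ref{FF} depend only on an upper bound for $u_j$, hence are uniform in $\beta\in(1,2)$, one can let $\beta\to 1$ in the limit and conclude $\Phi\le e^{\frac{n-2}{2}u}\le 1$, i.e.\ $v\le u\le 0$. Combined with $|\Phi-1|=O(|x|^{-\tau})$ (the Morrey embedding you already cited), this gives $|u|=O(|x|^{-\tau})$ and hence $C^0_{-\tau}$-asymptotic flatness.

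So the ``small refinement'' is not a delicate near-infinity analysis: it is simply to choose the constant supersolution close to $1$ in Guan's theorem (to pin $u$ below a level that can be sent to $0$) while the scalar-flat conformal factor pins $u$ from below, and then to pass $\beta\to 1$ using the $\beta$-uniformity of the interior estimates. Your proposed PDE-theoretic route for the upper bound is both unnecessary and, as you concede yourself, incomplete as written; in that state your argument does not establish the decay of $u$.
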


\begin{customrmk}{6}\label{79}
	We only assume in Corollary \ref{A} that $p>\frac{n}{2}$. If $g_0$ is $W^{2,p}_{-\tau}$-asymptotically flat and one imposes the stronger assumption $p>n$, then the Morrey embedding theorem for weighted Sobolev spaces (see \cite[Theorem 1.2]{Bart86}) implies that $g_0$ is $C^1_{-\tau}$-asymptotically flat. Moreover, a recent result of Zhang \cite[Theorem 3.4]{Z18} shows that a $C^1_{-\tau}$-asymptotically flat manifold $(M,g_0)$ with $\operatorname{Ric}_{g_0} \geq 0$ is isometric to Euclidean space. Since $\lambda(g_0^{-1}A_{g_0})\in\Gamma_k^+$ implies $\operatorname{Ric}_{g_0}\geq 0$ if $k\geq \frac{n}{2}$ \cite{GVW03}, it follows that there is no $W^{2,p}_{-\tau}$-asymptotically flat manifold satisfying $\lambda(g_0^{-1}A_{g_0})\in\Gamma_k^+$ on $M^n$ for $p>n$ and $k\geq \frac{n}{2}$ (see also the related non-existence results of Li \& Nguyen \cite{LN20}). On the other hand, we provide in Section \ref{ex} examples of manifolds satisfying the hypotheses of Corollary \ref{A} whenever $\mu_\Gamma^+>1$ (which we recall is equivalent to $k<\frac{n}{2}$ when $\Gamma = \Gamma_k^+$). Our examples consist of Riemannian Schwarzchild-type metrics and suitable perturbations. 
\end{customrmk}

Asymptotically flat metrics have also been considered in the context of fully nonlinear Yamabe-type problems by other authors. Suppose that $\Gamma$ satisfies \eqref{21'}, \eqref{22'} and $\mu_\Gamma^+>1$, and let $(M,g_0)$ be a closed manifold satisfying $\lambda(g_0^{-1}A_{g_0})\in\Gamma$ on $M^n$. In \cite{LN20}, Li \& Nguyen prove (among other results) the existence of a Green's function for $\Gamma$, for any given set of poles $\{p_1,\dots,p_m\}$ and corresponding weights. In particular, the metric $g$ they construct belongs to $C^{1,1}_{\operatorname{loc}}(M\backslash\{
p_1,\dots,p_m\})$, satisfies $\lambda(g^{-1}A_g)\in\partial\Gamma$ a.e.~in $M\backslash\{
p_1,\dots,p_m\}$, and has an asymptotically flat end at each pole. We point out that the assumptions in Corollary \ref{A} do not necessarily imply that $(M,g_0)$ admits a smooth conformal compactification $(\overline{M},\overline{g}_0)$ satisfying $\lambda(\overline{g}_0^{-1}A_{\overline{g}_0})\in\Gamma$ on $\overline{M}$, and thus Corollary \ref{A} does not immediately follow from the results in \cite{LN20}. As in \cite{LN20}, part of our motivation for studying the equation $\lambda(g^{-1}A_g)\in\partial\Gamma$ stems from the expectation that, in the study of compactness problems for the $\sigma_k$-Yamabe equation on closed manifolds, such metrics may arise as limits of appropriately rescaled sequences of blow-up solutions. We note that the existence of solutions and compactness of the solution set to the $\sigma_k$-Yamabe problem on closed, non-locally conformally flat manifolds when $2<k<\frac{n}{2}$ remains a major open problem in conformal geometry, as does compactness of the solution set when $k=2$.  

In related work, Ge, Wang \& Wu introduced in \cite{GWW14} ADM-type masses $m_k$ for asymptotically flat manifolds using the so-called Gauss-Bonnet curvatures $L_k$ when $1\leq k < \frac{n}{2}$ (see also the work of Li \& Nguyen \cite{LN13} for a related notion). For example, the second Gauss-Bonnet curvature is given by 
\begin{align*}
L_2 = \|W_g\|^2 + 8(n-2)(n-3)\sigma_2(\lambda(g^{-1}A_g)), 
\end{align*}
and thus $L_2$ is a positive multiple of $\sigma_2(\lambda(g^{-1}A_g))$ on locally conformally flat manifolds. Ge, Wang \& Wu prove a corresponding positive mass theorem in the graphical case in \cite{GWW14}, and in the conformally flat case in \cite{GWW14b}, assuming integrability and nonnegativity of $L_k$. See also \cite[Theorem 1.2]{LN14b} for a related result in the locally conformally flat case. It would be interesting to see if Corollary \ref{A} can be applied in the study of these generalised masses. \medskip

The plan of paper is as follows. In Section \ref{c2} we prove Theorem \ref{H} and give some further existence results for possibly incomplete metrics. In Section \ref{ex2} we give examples of manifolds with warped product ends satisfying the assumptions of Theorem \ref{H}. In Section \ref{pos} we prove Theorem \ref{A'} and Corollary \ref{A}. Finally, in Section \ref{ex} we give examples of asymptotically flat manifolds satisfying the hypotheses of Corollary \ref{A}. \medskip 

\noindent\textbf{Acknowledgements:} The authors would like to thank Matthew Gursky, Yanyan Li, Luc Nguyen and Yannick Sire for helpful discussions and comments.

\section{Existence results in the negative case}\label{c2}

\subsection{Proof of Theorem \ref{H}}

The proof of Theorem \ref{H} is via an exhaustion argument. One result that will be used to carry out this procedure is the following recent existence result of Duncan \& Nguyen \cite{DN23}: 

\begin{customthm}{A}[\cite{DN23}]\label{AA}
	\textit{Let $(N,g_0)$ be a smooth compact Riemannian manifold of dimension $n\geq 3$ with non-empty boundary $\partial N$, and suppose that $(f,\Gamma)$ satisfies \eqref{21'}--\eqref{24'} and $\mu_\Gamma^+>1$. Let $\psi\in C^\infty(N)$ be positive and $\xi\in C^\infty(\partial N)$. Then there exists a Lipschitz viscosity solution $g_u = e^{2u}g_0$ to 
	\begin{align}\label{61}
	\begin{cases}
	f(\lambda(-g_u^{-1}A_{g_u})) = \psi, \quad \lambda(-g_u^{-1}A_{g_u})\in\Gamma & \text{on }N\backslash\partial N \\
	u = \xi & \text{on }\partial N.  
	\end{cases}
	\end{align}
	Moreover, when $(1,0,\dots,0)\in\Gamma$, the solution $u$ is smooth and is the unique continuous viscosity solution to \eqref{61}. }
\end{customthm}

In fact, in \cite{DN23} the authors prove existence of solutions to a class of fully nonlinear Loewner-Nirenberg-type problems, although it will suffice to consider \eqref{61} with $\xi\equiv 0$ in this paper. Another important ingredient in the proof of Theorem \ref{H} is a local gradient and Hessian estimate on solutions to the equation in \eqref{61} depending only on an upper bound for the solution $u$: 

\begin{customthm}{B}[\cite{CLL23, CLL23-2}]\label{BB}
	\textit{Let $(N,g_0)$ be a smooth Riemannian manifold of dimension $n\geq 3$, possibly with non-empty boundary, and suppose that $(f,\Gamma)$ satisfies \eqref{21'}--\eqref{24'}. Fix $\tau\in[0,1]$ and suppose $\mu_{\Gamma^\tau}^+\not=1$. Let $\psi\in C^\infty(N)$ be positive and suppose $g_u = e^{2u}g_0$, $u\in C^4(B_r)$, satisfies 
		\begin{align*}
		f^\tau(\lambda(-g_u^{-1}A_{g_u})) = \psi, \quad  \lambda(-g_u^{-1}A_{g_u})\in\Gamma^\tau
		\end{align*}
		in a geodesic ball $B_r$ contained in the interior of $N$. Then 
		\begin{align*}
	|\nabla_{g_0} u|_{g_0}(x) \leq C(r^{-1} + e^{\sup_{B_r} u})\quad \text{for }x\in B_{r/2},
		\end{align*}
		where $C$ is a constant depending on $n,f,\Gamma, \|g_0\|_{C^3(B_r)}$ and  $\|\psi\|_{C^1(B_r)}$ but independent of $\tau$ and $\inf_{B_r} u$. If $\tau<1$ or $(1,0,\dots,0)\in\Gamma$, then 
	\begin{align*}
	|\nabla_{g_0}^2 u|_{g_0}(x) \leq C(r^{-2} + e^{2\sup_{B_r} u})\quad \text{for }x\in B_{r/2},
	\end{align*}
	where $C$ is a constant depending on $\tau, n,f,\Gamma, \|g_0\|_{C^4(B_r)}$ and  $\|\psi\|_{C^2(B_r)}$ but independent $\inf_{B_r} \psi$.}
\end{customthm}

We point out that the gradient estimate in Theorem \ref{BB} was obtained in the thesis of Khomrutai \cite{Kho09} in the special case that $(f,\Gamma) = (\sigma_k^{1/k},\Gamma_k^+)$ for either $k<n/2$ or $k\in\{n-1,n\}$. In \cite{DN23}, Duncan \& Nguyen extended the method of Khomrutai in the case $k<n/2$ to prove a local gradient estimate whenever $\mu_\Gamma^+>1$. For the general case when $\mu_\Gamma^+\not=1$, the gradient estimate in Theorem \ref{BB} was obtained by Chu, Li \& Li in \cite{CLL23} as an application of a very general Liouville-type theorem obtained therein, and counterexamples to such estimates in the case $\mu_\Gamma^+=1$ are also given. We have been informed by Chu, Li \& Li that the Hessian estimate stated in Theorem \ref{BB} will appear in \cite{CLL23-2}, under even more general assumptions on $f$. We point out that in this paper, we will only require Theorem \ref{BB} in the case $\mu_\Gamma^+>1$.

\begin{proof}[Proof of Theorem \ref{H}]
We first consider the case $(1,0,\dots,0)\in\Gamma$. Consider an exhaustion $\{M_j\}_{j=1}^\infty$ of $M^n$ by compact manifolds with boundary. By Theorem \ref{AA}, for each $j$ there exists a unique smooth solution $g_{u_j} = e^{2u_j}g_0$ to
\begin{align}\label{29}
\begin{cases}
f(-\lambda(g_{u_j}^{-1}A_{g_{u_j}})) = \psi, \quad \lambda(-g_{u_j}^{-1}A_{g_{u_j}}) \in\Gamma  & \text{in }M_j\backslash \partial M_j \\
u_j = 0  & \text{on }\partial M_j. 
\end{cases}
\end{align}

Now, by 1-homogeneity of $f$, we may assume without loss of generality that $f(\frac{1}{2},\dots,\frac{1}{2})=1$. With this normalisation, \eqref{10} then implies that the scalar curvature of the solution to \eqref{29} satisfies $R_{g_{u_j}} \leq -n(n-1)\inf_M \psi$. On the other hand, by the solution to the Loewner-Nirenberg problem on Riemannian manifolds due to Aviles \& McOwen \cite{AM88}, on any domain $\Omega\subset M^n$ there exists a smooth solution $g_{w_\Omega} = e^{2w_\Omega}g_0$ to 
\begin{align*}
\begin{cases}
R_{g_{w_\Omega}} = -n(n-1)\inf_M\psi & \text{on }\Omega \\
w_\Omega(x)\rightarrow+\infty & \text{as }\operatorname{dist}_{g_0}(x,\partial\Omega)\rightarrow 0. 
\end{cases}
\end{align*}
In particular, if $\Omega \Subset M_j$, then the comparison principle implies 
\begin{align*}
u_j \leq w_\Omega \quad \text{on }\Omega, 
\end{align*}
which gives a finite upper bound for $u_j$ on any compact subset of $\Omega$. It follows that for any compact subset $K\subset M^n$ and $j$ sufficiently large so that $K\subset M_j\backslash \partial M_j$, we have
\begin{align}\label{40}
\sup_K u_j \leq C_K,
\end{align} 
where here and henceforth, $C_K$ is a constant independent of $j$ but possibly depending on $K$ (we allow $C_K$ to change from line to line). By Theorem \ref{BB}, it follows from \eqref{40} that 
\begin{align}\label{41}
\|\nabla_{g_0} u_j\|_{C(K)} \leq C_K,
\end{align}
and
\begin{align}\label{46}
\|\nabla_{g_0}^2 u_j\|_{C(K)} \leq C_K. 
\end{align}

We now observe that at least one of the following statements holds: 
\begin{enumerate}
	\item There exists a subsequence of solutions to \eqref{29} (still denoted $u_j$) and a constant $C$ such that $\inf_{M_1} u_j \geq - C$ for all $j$. 
	\item There exists a subsequence of solutions to \eqref{29} (still denoted $u_j$) such that $\inf_{M_1} u_j\rightarrow-\infty$ as $j\rightarrow+\infty$. 
\end{enumerate}
Note that in both cases we are only considering the infimum over $M_1$. We consider these two cases separately. \medskip 

\noindent\textbf{\underline{Case 1}:} In this case, in combination with \eqref{40}--\eqref{46} we have
\begin{align*}
\|u_{j}\|_{C^2(M_1,g_0)} \leq C.
\end{align*}
$C^{2,\alpha}$ estimates in $M_1$ then follow from the regularity theory of Evans-Kyrlov \cite{Evans82, Kry82}, and higher order estimates follow from classical Schauder theory (see e.g.~\cite{GT}). Moreover, with a uniform $C^0$ estimate for $u_j$ established in $M_1$, \eqref{41} allows one to propagate this $C^0$ estimate to any compact subset. In combination with \eqref{46} and the aforementioned regularity theory, we therefore have for any $\ell\geq 2$ and any compact set $K\subset M$ such that $K\subset M_j\backslash\partial M_j$ the estimate
\begin{align}\label{50}
\|u_{j}\|_{C^\ell(K,g_0)} \leq C_{K,\ell}.
\end{align}

We now show that the solutions $u_j$ actually satisfy a \textit{global} lower bound independently of $j$; it is for this purpose we use the assumption $f(\lambda(-g_0^{-1}A_{g_0}))\geq c > 0$ and $\lambda(-g_0^{-1}A_{g_0})\in\Gamma$ on $M^n\backslash K_0$ for some compact set $K_0$. First observe that for fixed $j$, either $u_j$ attains its minimum in $K_0$ or in $M_j\backslash K_0$. In the former case, \eqref{50} implies $u_j \geq -C_{K_0}$ in $M_j$ for some constant $C_{K_0}$ independent of $j$. In the latter case, either $u_j$ attains its minimum at some $x_j\in\partial M_j$, in which case $u_j \geq 0$ on $M_j$, or it attains its minimum at some point $x_j$ in the interior of $M_j\backslash K_0$. At such a point, $\nabla_{g_0}^2 u_{j}(x_{j}) \geq 0$ and $du_j(x_j) =0$, and thus by the equation \eqref{29} we have
\begin{align*}
e^{2u_{j}(x_{j})}\sup_M\psi \geq f(-g_0^{-1}A_{g_0})(x_j) \geq c>0.
\end{align*}
This implies 
\begin{align*}
u_j(x_j) \geq \frac{1}{2}\ln ((\sup_M\psi)^{-1}c). 
\end{align*}
We have therefore shown 
\begin{align}\label{80'}
u_j \geq \min\bigg\{-C_{K_0}, 0, \frac{1}{2}\ln ((\sup_M\psi)^{-1}c)\bigg\} \quad \text{in }M_j. 
\end{align}

Using \eqref{50}, one can apply a standard diagonal argument to extract a subsequence which converges in $C^\infty_{\operatorname{loc}}(M^n)$ to some $u \in C^\infty_{\operatorname{loc}}(M^n)$ solving
\begin{align*}
f(\lambda(-g_u^{-1}A_{g_u})) = \psi, \quad \lambda(-g_u^{-1}A_{g_u})\in\Gamma  \quad \text{on }M^n. 
\end{align*}
Moreover, by \eqref{80'} $u$ is uniformly bounded from below on $M$. Since $g_0$ is complete, it is easy to see that this global lower bound on $u$ implies that the length of any divergent curve in $M$ with respect to $g_u$ is infinite, and therefore $g_u$ is also complete.  \medskip

\noindent\textbf{\underline{Case 2}:} Recall that in this case we assume $\inf_{M_1} u_j\rightarrow-\infty$ as $j\rightarrow\infty$. On each $M_j$ define
\begin{align*}
\widehat{u}_j = u_j- \inf_{M_1} u_j.
\end{align*}
Then 
\begin{align*}
f(\lambda(-g_{\widehat{u}_j}^{-1}A_{g_{\widehat{u}_j}})) = f(\lambda(-g_{\widehat{u}_j}^{-1}A_{g_{u_j}})) = e^{2\inf_{M_1} u_j} f(\lambda(-g_{u_j}^{-1}A_{g_{u_j}})) \stackrel{\eqref{29}}{=}  \psi e^{2\inf_{M_1}u_j}, 
\end{align*}
and so $\widehat{u}_j$ satisfies 
\begin{align}\label{30}
\begin{cases}
f(\lambda(-g_{\widehat{u}_j}^{-1}A_{g_{\widehat{u}_j}})) =  \psi e^{2\inf_{M_1} u_j}, \quad \lambda(-g_{\widehat{u}_j}^{-1}A_{g_{\widehat{u}_j}})\in\Gamma & \text{in }M_j\backslash \partial M_j \\
\widehat{u}_j = -\inf_{M_1} u_j & \text{on }\partial M_j. 
\end{cases}
\end{align}

Now, for each $j$ there exists $y_j\in M_1$ such that $\widehat{u}_j(y_j) = 0$. On the other hand, \eqref{41} and \eqref{46} immediately imply local first and second derivative estimates for $\widehat{u}_j$. Since $\widehat{u}_j(y_j) = 0$, the local first derivative estimate in particular implies a $C^0$ bound for $\widehat{u}_j$ on any compact subset of $M_j$. Therefore, for any compact subset $K\subset M^n$ and $j$ sufficiently large so that $K\subset M_j\backslash \partial M_j$, we obtain
\begin{align}\label{51}
\|\widehat{u}_j\|_{C^2(K,g_0)} \leq C_K.
\end{align} 
(Note that since $e^{2\inf_{M_1} u_j}\rightarrow 0$ as $j\rightarrow\infty$ and $\psi$ is uniformly bounded from above on $M$, the RHS of the equation in \eqref{30} tends to zero as $j\rightarrow \infty$. Thus we cannot necessarily apply the theory of Evans-Krylov \cite{Evans82, Kry82} to obtain higher order estimates.)

Following a similar argument to Case 1, we now show that the solutions $\widehat{u}_j$ satisfy a \textit{global} lower bound independently of $j$. For fixed $j$, if $\widehat{u}_j$ attains its minimum in $K_0$, then \eqref{51} implies $\widehat{u}_j \geq -C_{K_0}$ in $M_j$. If the minimum occurs at some $x_j\in \partial M_j$, then $\widehat{u}_j(x_j) = -\inf_{M_1} u_j \geq 0$ for sufficiently large $j$ (since we assume $\inf_{M_1}u_j\rightarrow-\infty$ as $j\rightarrow\infty$), and thus $\widehat{u}_j \geq 0$ in $M_j$ for large $j$. Finally, if the minimum occurs at some $x_j$ in the interior of $M_j\backslash K_0$, then $\nabla_{g_0}^2\widehat{u}_j(x_j) \geq 0$ and $d\widehat{u}_j (x_j) = 0$, and thus by the equation \eqref{29} we have
\begin{align*}
e^{2\widehat{u}_j(x_j) + 2\inf_{M_1} u_j} \sup\psi  \geq f(-g_0^{-1}A_{g_0})(x_j) \geq c >0. 
\end{align*}
This implies 
\begin{align*}
\widehat{u}_j(x_j) \geq \frac{1}{2}\ln ((\sup\psi)^{-1}c) - \inf_{M_1} u_j \geq \frac{1}{2}\ln ((\sup\psi)^{-1}c)
\end{align*}
for large $j$, and we have therefore shown 
\begin{align}\label{80''}
\widehat{u}_j \geq \min\bigg\{-C_{K_0}, 0, \frac{1}{2}\ln ((\sup_M\psi)^{-1}c)\bigg\} \quad \text{in }M_j \text{ for large }j.
\end{align}

Using \eqref{51}, one can apply a standard diagonal argument to extract a subsequence which converges in $C^{1,\alpha}_{\operatorname{loc}}(M^n)$ to some $\widehat{u}\in C^{1,1}_{\operatorname{loc}}(M^n)$ satisfying 
\begin{align*}
\lambda(-g_{\widehat{u}}^{-1}A_{g_{\widehat{u}}}) \in\partial\Gamma \quad \text{a.e.~in }M, 
\end{align*}
where we have used the fact that the RHS of the equation in \eqref{30} tends to zero as $j\rightarrow \infty$. By \eqref{80''}, $\widehat{u}$ is uniformly bounded from below on $M$, and thus $g_{\widehat{u}}$ is complete using the same reasoning as in Case 1. \medskip 

This completes the proof of Theorem \ref{H} in the case $(1,0,\dots,0)\in\Gamma$. The existence of a Lipschitz viscosity solution when $(1,0,\dots,0)\in\partial\Gamma$ is then obtained by an approximation argument using Corollary \ref{F'} and the fact that the $C^1$ estimates obtained above are independent of $\tau$; we refer the reader to proof of Theorem 1.3 in \cite{LN20b} for the details.
\end{proof}

\subsection{Further existence results for possibly incomplete metrics}\label{inc}

It is clear from the proof of Theorem \ref{H} that the only role played by the assumption $f(\lambda(-g_0^{-1}A_{g_0})) \geq c>0$ on $M\backslash K_0$ is to ensure completeness of the limiting metrics. We therefore have the following as an immediate consequence of the proof of Theorem \ref{H}: 

\begin{thm}\label{H'}
	Suppose $(f,\Gamma)$ satisfies \eqref{21'}--\eqref{24'} and $\mu_\Gamma^+>1$, and let $(M^n,g_0)$ be a smooth complete non-compact manifold. Then the same conclusion as in Theorem \ref{H} holds, except that the solutions may be incomplete at infinity. 
\end{thm}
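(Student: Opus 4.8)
The plan is to observe that Theorem \ref{H'} is simply Theorem \ref{H} with the completeness-at-infinity conclusion dropped, and that its proof is obtained by deleting from the proof of Theorem \ref{H} precisely those parts that invoke the hypothesis $f(\lambda(-g_0^{-1}A_{g_0})) \geq c > 0$ on $M \backslash K_0$. So I would first re-run the exhaustion argument verbatim: fix an exhaustion $\{M_j\}$ of $M$ by smooth compact manifolds with boundary, and for $(1,0,\dots,0)\in\Gamma$ use Theorem \ref{AA} (applicable since $\mu_\Gamma^+ > 1$) to produce the unique smooth solutions $g_{u_j} = e^{2u_j}g_0$ to the Dirichlet problem \eqref{29} with zero boundary data. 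The normalisation $f(\tfrac12,\dots,\tfrac12) = 1$ together with \eqref{10} gives $R_{g_{u_j}} \leq -n(n-1)\inf_M \psi$, and comparison with the Aviles--McOwen solution $g_{w_\Omega}$ of the Loewner--Nirenberg problem on $\Omega \Subset M_j$ yields $u_j \leq w_\Omega$ on $\Omega$, hence the local upper bound \eqref{40}. Theorem \ref{BB} then converts this into the local gradient and Hessian bounds \eqref{41}, \eqref{46}, with constants depending only on the compact set $K$ and not on $j$.

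Next I would split into the same two cases according to the behaviour of $\inf_{M_1} u_j$. In Case 1 ($\inf_{M_1} u_j \geq -C$ along a subsequence), combining with \eqref{40}--\eqref{46} gives a uniform $C^2$ bound on $M_1$; Evans--Krylov and Schauder theory then give $C^\ell$ bounds on $M_1$ for all $\ell$, and the gradient bound \eqref{41} propagates the $C^0$ control to any compact $K$, yielding \eqref{50}. A diagonal argument extracts a subsequence converging in $C^\infty_{\operatorname{loc}}(M)$ to a smooth $u$ solving $f(\lambda(-g_u^{-1}A_{g_u})) = \psi$, $\lambda(-g_u^{-1}A_{g_u})\in\Gamma$ on $M$ — this is conclusion 1(a), but now we simply do \emph{not} claim completeness, since without the hypothesis on $g_0$ at infinity we have no global lower bound for $u$. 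In Case 2 ($\inf_{M_1} u_j \to -\infty$), set $\widehat{u}_j = u_j - \inf_{M_1} u_j$; this is admissible for the rescaled equation \eqref{30} whose right-hand side $\psi e^{2\inf_{M_1} u_j} \to 0$, and since $\widehat{u}_j(y_j) = 0$ for some $y_j\in M_1$, the (scale-invariant) local gradient and Hessian bounds \eqref{41}, \eqref{46} give a uniform $C^2$ bound on compact sets, i.e.\ \eqref{51}. A diagonal argument produces $\widehat{u}\in C^{1,1}_{\operatorname{loc}}(M)$ with $\lambda(-g_{\widehat{u}}^{-1}A_{g_{\widehat{u}}})\in\partial\Gamma$ a.e., because the right-hand side vanishes in the limit — this is conclusion 1(b), again without the completeness claim.

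Finally, for the case $(1,0,\dots,0)\in\partial\Gamma$, I would note that exactly as in the proof of Theorem \ref{H}, one approximates $\Gamma$ by the cones $\Gamma^\tau$ with $\tau\uparrow 1$, uses the trace-modified version of the above (Corollary \ref{F'}, whose hypotheses reduce to those of Theorem \ref{H'} when the ``$f(\lambda(-g_0^{-1}A_{g_0}))\geq c$'' assumption is dropped), and passes to the limit using that the $C^1$ estimates from Theorem \ref{BB} are independent of $\tau$; the details are identical to the proof of Theorem 1.3 in \cite{LN20b}. The only substantive point to make explicit is that \emph{every} step in the proof of Theorem \ref{H} that used $f(\lambda(-g_0^{-1}A_{g_0}))\geq c > 0$ on $M\backslash K_0$ — namely the derivations of the global lower bounds \eqref{80'} and \eqref{80''} and the subsequent completeness assertions — is used \emph{only} to conclude completeness, so deleting it leaves a valid proof of the weaker statement. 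There is no real obstacle here; the ``main obstacle'' is purely expository, namely stating the result so as to make clear that it is a corollary of the \emph{proof} (rather than the statement) of Theorem \ref{H}, which is why it is placed immediately afterwards.
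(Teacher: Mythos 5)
Your proposal is correct and matches the paper's argument exactly: the paper proves Theorem \ref{H'} by observing (immediately before the statement) that the hypothesis $f(\lambda(-g_0^{-1}A_{g_0}))\geq c>0$ on $M\backslash K_0$ enters the proof of Theorem \ref{H} only in deriving the global lower bounds \eqref{80'}, \eqref{80''} used to establish completeness, so deleting those passages yields the incomplete-at-infinity version. Your walk-through of the exhaustion argument, the two-case split on $\inf_{M_1}u_j$, and the $\tau\uparrow 1$ approximation for $(1,0,\dots,0)\in\partial\Gamma$ is exactly what "immediate consequence of the proof of Theorem \ref{H}" means here.
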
 

Moreover, if we assume further that the background metric has nonnegative scalar curvature, then we can obtain a solution to the degenerate equation:

\begin{thm}\label{H''}
	Assume in addition to the hypotheses in Theorem \ref{H'} that $R_{g_0}\geq 0$. If $(1,0,\dots,0)\in\Gamma$ (resp.~$(1,0,\dots,0)\in\partial\Gamma$), there exists a $C^{1,1}_{\operatorname{loc}}$ (resp.~$C^{0,1}_{\operatorname{loc}}$) metric $g\in[g_0]$, possibly incomplete at infinity, satisfying $\lambda(-g^{-1}A_g)\in\partial\Gamma$ a.e.~(resp.~in the viscosity sense) on $M$.
\end{thm}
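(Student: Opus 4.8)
The plan is to revisit the exhaustion argument in the proof of Theorem \ref{H'}, isolate exactly where the dichotomy (Case 1 versus Case 2) arises, and show that the extra hypothesis $R_{g_0} \geq 0$ forces us into Case 2 — the degenerate case producing a solution of $\lambda(-g^{-1}A_g) \in \partial\Gamma$. Concretely, I first treat $(1,0,\dots,0)\in\Gamma$. As in the proof of Theorem \ref{H}, take an exhaustion $\{M_j\}$ of $M$ and let $g_{u_j}=e^{2u_j}g_0$ be the unique smooth solution to \eqref{29} with $\psi\equiv 1$ (any fixed positive constant works, and I'd normalise $f(\tfrac12,\dots,\tfrac12)=1$ as before). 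The local estimates \eqref{40}, \eqref{41}, \eqref{46} from Theorem \ref{BB} still apply verbatim, since they only require an upper bound for $u_j$ on compact sets, which is supplied by comparison with the Loewner--Nirenberg solution $w_\Omega$ of Aviles--McOwen exactly as in the original argument.

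The key new step is to rule out Case 1, i.e.\ to show $\inf_{M_1} u_j \to -\infty$. Suppose not: then along a subsequence $\inf_{M_1} u_j \geq -C$, and combined with the local estimates one extracts (via Evans--Krylov and Schauder, as in Case 1 of the proof of Theorem \ref{H}) a subsequence converging in $C^\infty_{\operatorname{loc}}$ to a solution $g_u \in C^\infty_{\operatorname{loc}}$ of $f(\lambda(-g_u^{-1}A_{g_u}))=1$, $\lambda(-g_u^{-1}A_{g_u})\in\Gamma$ on all of $M$. By \eqref{10} (with the normalisation), such a metric has $R_{g_u}\leq -n(n-1)<0$, so in particular $g_u$ is a complete conformal metric of strictly negative scalar curvature bounded away from $0$. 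The contradiction should come from comparing $g_u$ against the background metric $g_0$: since $R_{g_0}\geq 0$ and $g_u = e^{2u}g_0$ with $R_{g_u}\leq -n(n-1)$, the scalar curvature transformation law gives a differential inequality for $u$ of the form $-\tfrac{4(n-1)}{n-2}\Delta_{g_0}(e^{\frac{n-2}{2}u}) = R_{g_u}e^{\frac{n+2}{2}u} - R_{g_0}e^{\frac{n-2}{2}u} \le -n(n-1)e^{\frac{n+2}{2}u}$ (one needs the Yamabe-operator form); a comparison/maximum-principle argument against the Aviles--McOwen supersolution $w_{M}$ on all of $M$ — or, if $M$ is not conformally a bounded domain, directly against the family $w_{\Omega_j}$ over an exhaustion — should force $u$ to be unbounded below on $M_1$, contradicting $\inf_{M_1}u \geq -C$. (The honest version of this step: one only needs that $M$ does not admit a \emph{complete} conformal metric with $R \equiv -n(n-1)$ and simultaneously a conformal metric with $R \ge 0$; but that is false in general — $\mathbb{R}^n$ is a counterexample — so the argument must instead be that the \emph{Dirichlet} solutions $u_j$ on a fixed $M_1$ cannot stay bounded below, which is where I expect the real work to be.)

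Granting that Case 2 holds, the rest is identical to Case 2 of the proof of Theorem \ref{H}: set $\widehat u_j = u_j - \inf_{M_1} u_j$, use \eqref{41} and \eqref{46} together with the normalisation $\widehat u_j(y_j)=0$ at some $y_j\in M_1$ to get $\|\widehat u_j\|_{C^2(K,g_0)}\le C_K$ on compact sets, note that the right-hand side $e^{2\inf_{M_1}u_j}\to 0$, and extract a $C^{1,\alpha}_{\operatorname{loc}}$-convergent subsequence with limit $\widehat u \in C^{1,1}_{\operatorname{loc}}$ satisfying $\lambda(-g_{\widehat u}^{-1}A_{g_{\widehat u}})\in\partial\Gamma$ a.e. We drop the global lower bound argument (which used $f(\lambda(-g_0^{-1}A_{g_0}))\ge c$), so the resulting metric may be incomplete at infinity — exactly as permitted in the statement. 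For the case $(1,0,\dots,0)\in\partial\Gamma$, apply the same approximation argument via Corollary \ref{F'} and the $\tau$-independence of the $C^1$ estimates, as in the proof of Theorem \ref{H} and \cite{LN20b}, to produce a $C^{0,1}_{\operatorname{loc}}$ viscosity solution of $\lambda(-g^{-1}A_g)\in\partial\Gamma$.

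The main obstacle is the second paragraph: making rigorous why $R_{g_0}\ge 0$ prevents the Dirichlet solutions $u_j$ on the fixed compact piece $M_1$ from staying uniformly bounded below. I expect the cleanest route is a direct comparison at the level of the equation \eqref{29}: on $M_j$, $g_{u_j}$ has $R_{g_{u_j}}\le -n(n-1)$, while $R_{g_0}\ge 0$; feeding both into the conformal Laplacian identity and comparing $u_j$ with the Loewner--Nirenberg solution $w_{M_j}$ of $R = -n(n-1)$ on $M_j$ (which blows up at $\partial M_j$, hence dominates the boundary data $u_j=0$), one gets $u_j \le w_{M_j}$, and then a \emph{lower} barrier built from $R_{g_0}\ge 0$ — e.g.\ comparing with solutions on $M_j$ of the linear problem $L_{g_0}\phi = 0$ with $\phi = 1$ on $\partial M_j$ — should pin $\inf_{M_1} u_j$ down only if it were bounded, yielding the contradiction. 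If this comparison cannot be closed in full generality it may be necessary to add a mild hypothesis (e.g.\ $R_{g_0}>0$ somewhere, or nonparabolicity), but the statement as given suggests $R_{g_0}\ge 0$ alone suffices via the exhaustion.
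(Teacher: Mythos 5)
Your proposal takes a genuinely different route from the paper, and it has a gap that you yourself flag in the final paragraph; the paper sidesteps that gap entirely with a different device.

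The paper does \emph{not} fix the right-hand side $\psi \equiv 1$ and attempt to rule out Case 1. Instead, it solves the Dirichlet problems on $M_j$ with a \emph{vanishing} right-hand side $\psi_j = j^{-1}$:
\begin{align*}
f(\lambda(-g_{u_j}^{-1}A_{g_{u_j}})) = j^{-1}, \quad \lambda(-g_{u_j}^{-1}A_{g_{u_j}})\in\Gamma \ \text{ in }M_j\backslash\partial M_j, \quad u_j = 0 \text{ on }\partial M_j.
\end{align*}
With this choice, \emph{both} branches of the dichotomy yield the degenerate equation in the limit: if $\inf_{M_1}u_j$ stays bounded below along a subsequence, the limit of $u_j$ itself satisfies $\lambda(-g^{-1}A_g)\in\partial\Gamma$ a.e.~since $j^{-1}\to 0$; if $\inf_{M_1}u_j\to -\infty$, the shifted functions $\widehat u_j = u_j - \inf_{M_1}u_j$ have right-hand side $j^{-1}e^{2\inf_{M_1}u_j}\to 0$ as in Theorem \ref{H}. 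So there is nothing to rule out. The hypothesis $R_{g_0}\geq 0$ plays a quite different role than you envision: because $\psi_j\to 0$, the Aviles--McOwen/Loewner--Nirenberg comparison no longer provides a uniform upper bound on $u_j$ (the comparison function degenerates as $\inf\psi_j\to 0$), so the paper instead obtains the cheap global bound $u_j\leq 0$ directly from the maximum principle applied to $w_j = e^{\frac{n-2}{2}u_j}$, since $R_{g_{u_j}}<0\leq R_{g_0}$ and $w_j=1$ on $\partial M_j$.

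The concrete gap in your proposal is the claimed contradiction in Case 1. First, with only the hypotheses of Theorem \ref{H'} (no $f(\lambda(-g_0^{-1}A_{g_0}))\geq c$ outside a compact set), the limit $g_u$ in Case 1 need \emph{not} be complete, so you cannot invoke any argument premised on a complete conformal metric with $R\leq -n(n-1)<0$. Second, even if it were complete, there is no contradiction: the existence of a complete conformal metric with $R<0$ and of another conformal metric with $R\geq 0$ are not mutually exclusive on a non-compact manifold, so $R_{g_0}\geq 0$ alone will not force $\inf_{M_1}u_j\to-\infty$. You correctly sense that "the real work" must live elsewhere; the paper's insight is to remove the need for that work by letting the prescribed curvature $\psi_j$ vanish in the limit. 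Your second-paragraph plan to build a lower barrier from $R_{g_0}\geq 0$ is going in the wrong direction — $R_{g_0}\geq 0$ naturally yields an \emph{upper} barrier for $u_j$, and that is exactly how the paper uses it.

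Once you replace the fixed $\psi\equiv 1$ with $\psi_j=j^{-1}$ and swap the Loewner--Nirenberg upper bound for the maximum-principle bound $u_j\leq 0$, the remaining steps you outline (local estimates from Theorem \ref{BB}, the shift $\widehat u_j$, the $\tau$-approximation via Corollary \ref{F'} for the boundary-ray case $(1,0,\dots,0)\in\partial\Gamma$) are all correct and match the paper.
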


\begin{proof}[Proof of Theorem \ref{H''}]
	Following the reasoning at the end of the proof of Theorem \ref{H}, it suffices to consider the case $(1,0,\dots,0)\in\Gamma$. Consider an exhaustion $\{M_j\}_{j=1}^\infty$ of $M$ by compact manifolds with boundary. Then by Theorem \ref{AA}, for each $j$ there exists a unique smooth $g_{u_j} = e^{2u_j}g_0$ to
	\begin{align*}
	\begin{cases}
	f(\lambda(-g_{u_j}^{-1}A_{g_{u_j}})) = j^{-1}, \quad \lambda(-g_{u_j}^{-1}A_{g_{u_j}})\in\Gamma & \text{on }M_j\backslash\partial M_j \\
	u_j = 0 & \text{on }\partial M_j.
	\end{cases}
	\end{align*}
	In particular, each metric $g_{u_j}$ has negative scalar curvature, and thus $w_j \defeq e^{\frac{n-2}{2}u_j}$ satisfies 
	\begin{align*}
	\begin{cases}
	-\Delta_{g_0} w_j + \frac{n-2}{4(n-1)}R_{g_0}w_j \leq 0 & \text{on }M_j\backslash \partial M_j \\
	w_j = 1 & \text{on }\partial M_j. 
	\end{cases}
	\end{align*}
	Since $R_{g_0} \geq 0$, the maximum principle implies $w_j \leq 1$ in $M_j$, or equivalently
	\begin{align}\label{71}
	u_j \leq 0 \quad \text{in }M_j. 
	\end{align}
	By Theorem \ref{BB}, it follows that for any compact $K\subset M^n$ and $j$ sufficiently large so that $K\subset M_j\backslash \partial M_j$, we have 
	\begin{align}\label{72}
	\|\nabla_{g_0} u_j \|_{C(K)} + \|\nabla_{g_0}^2 u_j \|_{C(K)} \leq C_K. 
	\end{align}
	We now split into two cases as in the proof of Theorem \ref{H}. First suppose that along a subsequence (still denoted $u_j$) it holds that $\inf_{M_1} u_j \geq -C$ for all $j$. Combining this with \eqref{71} and \eqref{72}, it follows that 
	\begin{align*}
	\|u_j\|_{C^2(K)} \leq C_K. 
	\end{align*}
	We can therefore extract a subsequence which converges in $C^{1,\alpha}_{\operatorname{loc}}(M^n)$ to some $u\in C^{1,1}_{\operatorname{loc}}(M^n)$ satisfying 
	\begin{align*}
	\lambda(-g_u^{-1}A_{g_u})\in\partial\Gamma \quad \text{a.e. on }M,
	\end{align*}
	as required. 
	
	Now suppose instead that along a subsequence (still denoted $u_j$) it holds that $\inf_{M_1} u_j \rightarrow -\infty$. Defining $\widehat{u}_j = u_j- \inf_{M_1} u_j$ as in Case 2 in the proof of Theorem \ref{H}, we see 
	\begin{align*}
	\begin{cases}
	f(\lambda(-g_{\widehat{u}_j}^{-1}A_{g_{\widehat{u}_j}})) =  j^{-1} e^{2\inf_{M_1} u_j}, \quad \lambda(-g_{\widehat{u}_j}^{-1}A_{g_{\widehat{u}_j}})\in\Gamma & \text{in }M_j\backslash \partial M_j \\
	\widehat{u}_j = -\inf_{M_1} u_j & \text{on }\partial M_j. 
	\end{cases}
	\end{align*}
	As justified in Case 2 in the proof of Theorem \ref{H}, it follows that
	\begin{align*}
	\| \widehat{u}_j\|_{C^2(K,g_0)} \leq C_K, 
	\end{align*}
	and since $j^{-1} e^{2\inf_{M_1} u_j}\rightarrow 0$ as $j\rightarrow\infty$, we can therefore extract a subsequence which converges in $C^{1,\alpha}_{\operatorname{loc}}(M^n)$ to some $\widehat{u}\in C^{1,1}_{\operatorname{loc}}(M^n)$ satisfying 
	\begin{align*}
	\lambda(-g_{\widehat{u}}^{-1}A_{g_{\widehat{u}}})\in\partial\Gamma \quad \text{a.e. on }M. 
	\end{align*} 
	This completes the proof of Theorem \ref{H''}. 
\end{proof}

\section{Examples in the negative case}\label{ex2}

In this section, we give a class of manifolds with warped product ends which satisfy the assumptions of Theorem \ref{H}. The set up for these examples is as follows. Suppose $M=M_0 \cup M_1\cup\dots M_m$ is a smooth $n$-manifold, where $M_0$ is a compact region and the $M_i$ ($i=1,\dots,m$) are mutually disjoint, non-compact ends such that each $M_i$ is disjoint from $M_0$ except along their common boundary. We further assume that each end splits as a product $M_i =[0,\infty)\times N_i$ for some compact $(n-1)$-manifold $N_i$, and we denote by $r$ the coordinate on the $[0,\infty)$ component. We also suppose that $N_i$ admits an Einstein metric $h_i$ which, after rescaling, we may assume has Einstein constant equal to $(n-2)k_i$ for some $k_i\in\{-1,0,1\}$. We then consider a metric $g_0$ on $M$ which takes the following form on each $M_i$ ($1\leq i \leq n$):
\begin{align*}
g_0|_{M_i} = dr^2 + \Phi_i^2(r) h_i. 
\end{align*}

\begin{prop}
	Suppose $(f,\Gamma)$ satisfies \eqref{21'}--\eqref{24'}, and suppose that for each $1\leq i \leq m$, $\Phi_i$ is a positive function on $[0,\infty)$ satisfying
	\begin{align}\label{82}
	\frac{\Phi'_i(r)^2 - k_i}{2\Phi_i(r)^2} \geq \ep>0  \quad \text{and} \quad \frac{2\Phi_i(r)\Phi_i''(r)}{\Phi_i'(r)^2 - k_i}  \geq -\mu_\Gamma^+ + 1 +\delta
	\end{align}
	for $r$ sufficiently large and for some positive constants $\ep$ and $\delta$. Then $g_0$ is a complete metric satisfying $f(\lambda(-g_0^{-1}A_{g_0})\geq c>0$ and $\lambda(-g_0^{-1}A_{g_0})\in\Gamma$ outside a compact subset of $M$, and thus satisfies the hypotheses of Theorem \ref{H}. 
\end{prop}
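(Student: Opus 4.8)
The plan is to compute the eigenvalues of $-g_0^{-1}A_{g_0}$ on each end $M_i$ explicitly from the warped product structure and to observe that the two inequalities in \eqref{82} are precisely what is needed for cone membership and for a lower bound on $f$. Recall the standard curvature formulas for a warped product $g_0|_{M_i} = dr^2 + \Phi_i^2 h_i$ over an Einstein base $(N_i^{n-1}, h_i)$ with $\operatorname{Ric}_{h_i} = (n-2)k_i h_i$: because $h_i$ is Einstein, the endomorphism $g_0^{-1}A_{g_0}$ is diagonal in the frame adapted to the splitting $M_i = [0,\infty)\times N_i$, with a simple eigenvalue $\lambda_r$ in the $\partial_r$ direction and an eigenvalue $\lambda_\perp$ of multiplicity $n-1$ tangent to the fibres, namely
\begin{align*}
\lambda_r = -\frac{\Phi_i''}{\Phi_i} + \frac{(\Phi_i')^2 - k_i}{2\Phi_i^2}, \qquad \lambda_\perp = -\frac{(\Phi_i')^2 - k_i}{2\Phi_i^2}.
\end{align*}
(This follows from $A_g = \frac{1}{n-2}(\operatorname{Ric}_g - \frac{R_g}{2(n-1)}g)$ together with the warped product formulas for $\operatorname{Ric}_g$ and $R_g$; as a check it reproduces $g^{-1}A_g = \pm\tfrac12\operatorname{Id}$ on the round sphere and on hyperbolic space.) Writing $\mu_i := \frac{(\Phi_i')^2 - k_i}{2\Phi_i^2}$, the first inequality in \eqref{82} gives $\mu_i \geq \ep > 0$ for $r$ large (so in particular $(\Phi_i')^2 - k_i > 0$ there), and the eigenvalue vector of $-g_0^{-1}A_{g_0}$ becomes
\begin{align*}
\Big(\tfrac{\Phi_i''}{\Phi_i} - \mu_i,\; \mu_i,\dots,\mu_i\Big) = \mu_i\Big(\tfrac{2\Phi_i\Phi_i''}{(\Phi_i')^2 - k_i} - 1,\; 1,\dots,1\Big).
\end{align*}

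Next I would use the elementary fact that, for $\Gamma$ satisfying \eqref{21'}--\eqref{22'}, one has $(t,1,\dots,1)\in\Gamma$ if and only if $t > -\mu_\Gamma^+$: the set $\{t : (t,1,\dots,1)\in\overline{\Gamma}\}$ is a closed interval by convexity, it contains $[0,\infty)$ since $\Gamma\supseteq\Gamma_n^+$, and its left endpoint is $-\mu_\Gamma^+$ by definition of that quantity. Since $\Gamma$ is a cone and $\mu_i > 0$ for $r$ large, $\lambda(-g_0^{-1}A_{g_0})\in\Gamma$ at points where $\tfrac{2\Phi_i\Phi_i''}{(\Phi_i')^2 - k_i} - 1 > -\mu_\Gamma^+$, which by the second inequality in \eqref{82} holds (with the margin $\geq -\mu_\Gamma^+ + \delta$) for $r$ large; since $M_0$ is compact, this gives $\lambda(-g_0^{-1}A_{g_0})\in\Gamma$ outside a compact subset of $M$. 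For the quantitative bound, $1$-homogeneity of $f$ gives $f(\lambda(-g_0^{-1}A_{g_0})) = \mu_i\, f\big(\tfrac{2\Phi_i\Phi_i''}{(\Phi_i')^2-k_i} - 1,\,1,\dots,1\big)$; since $f_{\lambda_1} > 0$ on $\Gamma$, the map $t\mapsto f(t,1,\dots,1)$ is increasing on $(-\mu_\Gamma^+,\infty)$, so it is bounded below by $c_0 := f(-\mu_\Gamma^+ + \delta,\, 1,\dots,1) > 0$, whence $f(\lambda(-g_0^{-1}A_{g_0}))\geq \ep\, c_0 =: c > 0$ on the ends for $r$ large. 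Finally, completeness of $g_0$ is immediate: a divergent curve must eventually escape $M_0$ and every set $\{r\leq R\}$, hence run out to $r = \infty$ inside some end $M_i$, where its $g_0$-length is at least $\int|dr| = \infty$.

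The computations involved are routine. The only two points requiring any care are the derivation of the displayed eigenvalue formulas — where the Einstein hypothesis on $h_i$ is exactly what forces $-g_0^{-1}A_{g_0}$ to have only the two distinct eigenvalues $\lambda_r$ and $\lambda_\perp$, so that $f$ can be evaluated explicitly — and the identification of $\{t : (t,1,\dots,1)\in\Gamma\}$ with $(-\mu_\Gamma^+,\infty)$, which is where the structure \eqref{21'}--\eqref{24'} of $(f,\Gamma)$ and the definition of $\mu_\Gamma^+$ from \cite{LN14b} enter. I do not anticipate a genuine obstacle.
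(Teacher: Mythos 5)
Your proposal is correct and follows essentially the same approach as the paper: compute the Ricci/Schouten tensor of the warped product, read off the two distinct eigenvalues of $-g_0^{-1}A_{g_0}$ (one simple in the $\partial_r$ direction, one of multiplicity $n-1$), factor out $\chi_1 = \frac{(\Phi')^2-k}{2\Phi^2}$ to reduce to a vector of the form $\chi_1\bigl(t,1,\dots,1\bigr)$, and then invoke the definition of $\mu_\Gamma^+$ together with homogeneity, positivity and monotonicity of $f$ to translate \eqref{82} into the required cone membership and lower bound; completeness is handled identically. You spell out the step where $\{t:(t,1,\dots,1)\in\Gamma\}=(-\mu_\Gamma^+,\infty)$ and where monotonicity of $t\mapsto f(t,1,\dots,1)$ yields the uniform lower bound $c=\ep\,f(-\mu_\Gamma^+ +\delta,1,\dots,1)$, which the paper leaves implicit, but the argument is the same.
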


\begin{proof}
It suffices to consider just a single end $M_i$, and in what follows indexing is suppressed in order to ease notation (i.e.~we write $\Phi$ instead of $\Phi_i$, $k$ instead of $k_i$, and $h$ instead of $h_i$). A standard computation using e.g.~\cite[Proposition 9.106]{Bes87} gives the following formula for $\operatorname{Ric}_{g_0}$ on $M_i$:
\begin{align*}
\operatorname{Ric}_{g_0} = -\bigg(\frac{\Phi''(r)}{\Phi(r)}-(n-2)\frac{k-\Phi'(r)^2}{\Phi(r)^2}\bigg)g_0 - (n-2)\bigg(\frac{\Phi''(r)}{\Phi(r)} + \frac{k-\Phi'(r)^2}{\Phi(r)^2}\bigg)dr\otimes dr. 
\end{align*}
It follows that
\begin{align*}
R_{g_0} = -2(n-1)\frac{\Phi''(r)}{\Phi(r)} - (n-1)(n-2)\frac{\Phi'(r)^2}{\Phi(r)^2} + \frac{(n-1)(n-2)k}{\Phi(r)^2}
\end{align*}
and so 
\begin{align*}
A_{g_0} & = \frac{1}{n-2}\bigg(\operatorname{Ric}_{g_0} - \frac{R_{g_0}}{2(n-1)}g_0\bigg) = \frac{k-\Phi'(r)^2}{2\Phi(r)^2}g_0 - \bigg(\frac{\Phi''(r)}{\Phi(r)}+\frac{k-\Phi'(r)^2}{\Phi(r)^2}\bigg)dr\otimes dr.
\end{align*}
Therefore, the eigenvalues of $-g_0^{-1}A_{g_0}$ are given by $(\chi_1-\chi_2, \chi_1, \dots, \chi_1)$, where
\begin{align*}
\chi_1 = \frac{\Phi'(r)^2-k}{2\Phi(r)^2} \quad \text{and} \quad \chi_2 = -\frac{\Phi''(r)}{\Phi(r)} + \frac{\Phi'(r)^2-k}{\Phi(r)^2} = -\frac{\Phi''(r)}{\Phi(r)} + 2\chi_1. 
\end{align*}
Writing the vector of eigenvalues of $-g_0^{-1}A_{g_0}$ in the form $\chi_1(1-\frac{\chi_2}{\chi_1}, 1,\dots,1)$, we see that a sufficient condition for $f(-g_0^{-1}A_{g_0})\geq c>0$ and $\lambda(-g_0^{-1}A_{g_0})\in\Gamma$ for $r$ sufficiently large is that $\chi_1 \geq \ep >0$ and $1-\frac{\chi_2}{\chi_1} \geq -\mu_\Gamma^+ + \delta$ for $r$ sufficiently large for some positive constants $\ep$ and $\delta$. These conditions are precisely the conditions stated in \eqref{82}. Completeness of $g_0$ follows from the fact that each $N_i$ is closed, thus the $r$-coordinate along any divergent curve in $M_i$ must tend to infinity, and hence the length of such a curve is infinite.
\end{proof}

In particular, one may take
\begin{align}\label{70}
\Phi_i(r) = \begin{cases}
\sinh(r) & \text{if }k_i = 1 \\
e^r & \text{if }k_i = 0 \\
\cosh(r) & \text{if }k_i = -1.
\end{cases}
\end{align}
It is easy to verify in each of these cases that $\chi_1 = \frac{1}{2}$ and $\chi_2 = 0$, and thus the eigenvalues of $-g_0^{-1}A_{g_0}$ in these cases are $(\frac{1}{2},\dots,\frac{1}{2})$. Indeed, the three cases in \eqref{70} correspond to locally hyperbolic ends. To construct examples when $\mu_\Gamma^+>1$ which are not locally hyperbolic, one can take $k=-1$ and $\Phi(r) = e^{\alpha \sin r}$ for suitable $\alpha>0$. Indeed, in this case
\begin{align*}
\frac{\Phi'(r)^2 - k}{2\Phi(r)^2} = \frac{\alpha^2\cos^2(r)}{2} + \frac{1}{2e^{2\alpha\sin r}} \geq \frac{1}{2e^{2\alpha\sin r}} \geq \ep(\alpha)>0
\end{align*}
and
\begin{align}\label{84}
\frac{2\Phi(r)\Phi''(r)}{\Phi'(r)^2 - k} = \frac{-2\alpha\sin r + 2\alpha^2 \cos^2 r}{\alpha^2\cos^2 r + e^{-2\alpha\sin r}} \geq -\frac{2\alpha\sin r}{\alpha^2\cos^2 r + e^{-2\alpha\sin r}}.
\end{align}
Since the quantity on the RHS of \eqref{84} tends to zero uniformly on $\mathbb{R}$ as $\alpha\rightarrow 0$, it can be made greater than $-\mu_\Gamma^+ + 1 + \delta$ if $\mu_\Gamma^+>1$ and both $\delta$ and $\alpha$ are sufficiently small.

\section{Existence results in the positive case}\label{pos}

In this section we prove Theorem \ref{A'} and Corollary \ref{A}. As in the proof of Theorem \ref{H}, our proofs of these results use an exhaustion argument, and we will make use of results analogous to Theorems \ref{AA} and \ref{BB}. The first of these is an existence result of Guan \cite{Gua07}:

\begin{customthm}{C}[\cite{Gua07}]\label{EE}
	\textit{Let $(N,g_0)$ be a smooth compact Riemannian manifold of dimension $n\geq 3$ with non-empty boundary $\partial N$, and suppose that $(f,\Gamma)$ satisfies \eqref{21'}--\eqref{24'}. Let $\psi\in C^\infty(N\times\mathbb{R})$ be positive and $\xi\in C^\infty(\partial N)$, and suppose there exists $\bar{u}\in C^\infty(N)$ such that 
		\begin{align*}
		\begin{cases}
		f(\lambda(g_{\bar{u}}^{-1}A_{g_{\bar{u}}})) \geq \psi(\cdot, \bar{u}) & \text{in }N\backslash\partial N \\
		\bar{u} = \xi & \text{on }\partial N. 
		\end{cases}
		\end{align*}
		Then there exists a solution $u\in C^\infty(N)$ to 
		\begin{align}\label{25}
		\begin{cases}
		f(\lambda(g_u^{-1}A_{g_u})) = \psi(\cdot, u) & \text{in }N\backslash\partial N \\
		u = \xi & \text{on }\partial N
		\end{cases}
		\end{align}
		satisfying $u\leq \bar{u}$ in $N$. }
\end{customthm}

Whilst Guan obtains global estimates on solutions to \eqref{25}, in keeping with the proof of Theorem \ref{H} we will use local estimates depending only on an upper bound for the solution. Such estimates have been considered by various authors, see e.g.~\cite{GW03b, LL03, Che05, JLL07, Li09, Wang09}. We will use them in the following form, obtained e.g.~in \cite{Che05}: 

\begin{customthm}{D}[\cite{Che05}]\label{FF}
	\textit{Let $(N,g_0)$ be a smooth Riemannian manifold of dimension $n\geq 3$, possibly with non-empty boundary, and suppose that $(f,\Gamma)$ satisfies \eqref{21'}--\eqref{24'}. Let $\psi\in C^\infty(N)$ be positive and suppose $g_u = e^{2u}g_0$,  $u\in C^4(B_r)$, satisfies 
	\begin{align*}
	f(\lambda(g_u^{-1}A_{g_u})) = \psi, \quad \lambda(g_u^{-1}A_{g_u}) \in\Gamma 
	\end{align*}
in a geodesic ball $B_r$ contained in the interior of $N$. Then 
\begin{align*}
|\nabla_{g_0} u|_{g_0}^2(x) + |\nabla_{g_0}^2 u|_{g_0}(x) \leq C(r^{-2} + e^{2\sup_{B_r}u}) \quad \text{for }x\in B_{r/2},
\end{align*}
where $C$ is a constant depending on $n, f, \Gamma, \|g_0\|_{C^4(B_r)}$ and  $\|\psi\|_{C^2(B_r)}$ but independent of $\inf_{B_r} \psi$.}
\end{customthm}

We now give the proof of Theorem \ref{A'}:

\begin{proof}[Proof of Theorem \ref{A'}]
	Let us denote $\Lambda = \sup_M v <\infty$ and define the metric $g_\Lambda = e^{2\Lambda}g_0$. Since we assume $\lambda(g_0^{-1}A_{g_0})\in\Gamma$ on $M$, it holds that $\lambda(g_\Lambda^{-1}A_{g_\Lambda})\in\Gamma$ on $M$ with
	\begin{align*}
	f(\lambda(g_{\Lambda}^{-1}A_{g_\Lambda})) = e^{-2\Lambda}f(\lambda(g_0^{-1}A_{g_0}))>0. 
	\end{align*}
	Let $\{M_j\}_{j=1}^\infty$ be an exhaustion of $M$ by compact manifolds with boundary, and for each $j$ define
	\begin{equation*}
	\ep_{j} = \min_{M_j} f(\lambda(g_{\Lambda}^{-1}A_{g_{\Lambda}}))>0.
	\end{equation*}
	Let $\ep = \lim_{j\rightarrow\infty} \ep_j \geq 0$, which is well-defined since $\{\ep_j\}$ is a non-increasing sequence of positive numbers. We first consider Case 1, where we assume $\inf_M f(\lambda(g_0^{-1}A_{g_0}))=0$, or equivalently $\ep=0$. Since
	\begin{align*}
	f(\lambda(g_{{\Lambda}}^{-1}A_{g_{{\Lambda}}})) \geq  \ep_{j} \quad \text{in }M_j,
	\end{align*}
	Theorem \ref{EE} implies the existence of a smooth solution $g_{u_j} = e^{2u_j}g_0$ to
	\begin{equation}\label{6'}
	\begin{cases}
	f(\lambda(g_{u_{j}}^{-1}A_{g_{u_{j}}}))= \ep_{j}, \quad \lambda(g_{u_{j}}^{-1}A_{g_{u_{j}}})\in\Gamma & \text{in }M_j\backslash\partial M_j \\
	u_{j} = \Lambda & \text{on }\partial M_j
	\end{cases}
	\end{equation}
	satisfying $u_{j} \leq \Lambda$ in $M_j$.
	
	Now, since $R_{g_{u_{j}}}>0$, the function $w_j = e^{\frac{n-2}{2}u_j}$ satisfies 
	\begin{align*}
	\begin{cases}
	-\Delta_{g_0} w_{j} + \frac{n-2}{4(n-1)}R_{g_0} w_{j} > 0 & \text{in }M_j\backslash\partial M_j \\
	w_{j} = e^{\frac{n-2}{2}\Lambda} \geq e^{\frac{n-2}{2}v} & \text{on }\partial M_j. 
	\end{cases}
	\end{align*}
On the other hand, since $g_v$ is assumed to have nonpositive scalar curvature, it holds that $\widetilde{v} = e^{\frac{n-2}{2}v}$ satisfies
\begin{align*}
-\Delta_{g_0}\widetilde{v} + \frac{n-2}{4(n-1)}R_{g_0}\widetilde{v}\leq 0 \quad \text{on }M. 
\end{align*} 
The maximum principle therefore implies $w_{j} \geq \widetilde{v}$ in $M_j$, and hence
\begin{align*}
\widetilde{v} \leq w_j \leq e^{\frac{n-2}{2}\Lambda} \quad \text{in }M_j,
\end{align*}
or equivalently
	\begin{equation*}
	v \leq u_{j} \leq \Lambda \quad\text{in }M_j.
	\end{equation*}
	
	We now appeal to Theorem \ref{FF}, which implies that for any compact $K\subset M^n$ and $j$ sufficiently large so that $K\subset M_j\backslash \partial M_j$, we have
	\begin{align}\label{70'}
	\| u_{j}\|_{C^{2}(K)} \leq C_{K}.
	\end{align}
	We can therefore extract a subsequence which converges in $C^{1,\alpha}_{\operatorname{loc}}(M^n)$ to some $u\in C^{1,1}_{\operatorname{loc}}(M^n)$ satisfying
	\begin{align}\label{12'}
	\begin{cases}
	\lambda(g_{u}^{-1}A_{g_{u}})\in\partial \Gamma & \text{a.e.~on }M \\
	v \leq u \leq \Lambda & \text{on }M. 
	\end{cases}
	\end{align}
	Since we assume that $g_v$ is complete, the lower bound $u \geq v$ in \eqref{12'} implies that $g_u$ is also complete, as required. 
	
	Next we consider Case 2, where we assume $\inf_M f(\lambda(g_0^{-1}A_{g_0}))>0$, or equivalently $\ep>0$. Then we may run an argument almost identical to the above, except that we replace the RHS of the equation in \eqref{6'} with the function $\widetilde{\ep}\psi$ (independently of $j$), where $\widetilde{\ep}$ is such that 
	\begin{align*}
	f(\lambda(g_{\Lambda}^{-1}A_{g_{\Lambda}})) \geq \widetilde{\ep}\sup_M \psi \quad \text{on }M. 
	\end{align*}
	One then obtains the estimate \eqref{70'} in the same way as before, from which local uniform ellipticity follows. The theory of Evans-Krylov \cite{Ev82, Kry82} then implies a local $C^{2,\alpha}$ estimate, and finally classical Schauder theory (see e.g.~\cite{GT}) yields higher order estimates. One may consequently extract a subsequence which converges in $C^\infty_{\operatorname{loc}}(M^n)$ to some $\widetilde{u}\in C^\infty_{\operatorname{loc}}(M^n)$ satisfying 
	\begin{align*}
	\begin{cases}
	f(\lambda(g_{\widetilde{u}}^{-1}A_{g_{\widetilde{u}}})) = \widetilde{\ep}\psi, \quad \lambda(g_{\widetilde{u}}^{-1}A_{g_{\widetilde{u}}})\in\Gamma & \text{on }M \\
	v \leq \widetilde{u} \leq \Lambda & \text{on }M. 
	\end{cases}
	\end{align*}
	It is then easy to see that $u\defeq \widetilde{u} + \frac{1}{2}\ln\widetilde{\ep}$ satisfies \eqref{81}, and completeness follows from the same reasoning as in Case 1. 
\end{proof}

We now turn to Corollary \ref{A}, for which we will require some additional definitions and notation. Suppose that $M^n$ is a smooth non-compact manifold for which there exists a compact set $K\subset M^n$, a number $R>0$ and a diffeomorphism $\Phi: M^n\backslash K\rightarrow \mathbb{R}^n\backslash B_R(0)$. Let $r\geq 1$ be a smooth function on $M^n$ which agrees with the Euclidean distance $|x|$ to the origin under the identification $\Phi$ in a neighbourhood of infinity, and let $\hat{g}$ be a smooth Riemannian metric on $M^n$ which agrees with the Euclidean metric under the identification $\Phi$ in a neighbourhood of infinity. For $\tau>0$, a Riemannian metric $g$ on $M^n$ is then called $W^{k,p}_{-\tau}$ (resp.~$C^k_{-\tau}$) asymptotically flat (with one end) if 
\begin{align*}
g-\hat{g} \in W^{k,p}_{-\tau}(M^n) \quad (\text{resp. }C^k_{-\tau}(M^n)).
\end{align*} 
Here, we say a tensor $T$ belongs to $W^{k,p}_{-\tau}(M)$ if it belongs to $L^1_{\operatorname{loc}}$ and
\begin{align*}
\|T\|_{W^{k,p}_{-\tau}(M)}\defeq \sum_{j=0}^k \big\|r^{\tau - \frac{n}{p}-j}\nabla^j T\big\|_{L^p(M)}<\infty,
\end{align*}
and we say that $T$ belongs to $C^k_{-\tau}$ if 
\begin{align*}
\|u\|_{C^k_{-\tau}(M)} \defeq \sum_{j=0}^k \sup_M \big(r^{\tau+j}|\nabla^j u|\big)<\infty. 
\end{align*}
There is also the notion of weighted H\"older spaces, although we will not need them here. The definitions above extend to asymptotically flat manifolds with more than one end in the obvious way. 

In the non-compact setting, the Yamabe invariant $Y(M^n,[g_0])$ is defined analogously to the compact setting, taking an infimum over compactly supported functions:

\begin{align*}
Y(M^n,[g_0]) = \inf_{0\not = v\in C_c^\infty(M^n)}\frac{\int_M (\frac{4(n-1)}{n-2}|\nabla_{g_0} v|_{g_0}^2 + R_{g_0} v^2) \,dv_{g_0}}{(\int_M v^{\frac{2n}{n-2}}\,dv_{g_0})^{\frac{n-2}{n}}}.
\end{align*}
It was observed by Maxwell in \cite{Max05} (see the discussion at the start of Section 5 therein) that if $(M^n,g_0)$ is a $W^{k,p}_{-\tau}$ asymptotically flat manifold with $\tau\in(0,n-2)$, $k\geq 2$ and $k>\frac{n}{p}$, then $Y(M^n,[g_0])>0$ if $[g_0]$ contains a metric $g$ with $R_g\geq 0$. Moreover, $Y(M^n,[g_0])>0$ if and only if there exists a positive function $\phi$ with $\phi - 1 \in W^{k,p}_{-\tau}(M^n)$ such that $g = \phi^{\frac{4}{n-2}}g_0$ is scalar flat -- see \cite[Proposition 3]{Max05}

We now give the proof of Corollary \ref{A}; the proof is largely the same as the proof of Theorem \ref{A'}, except a little more care is needed to obtain a $C^0_{-\tau}$-asymptotically flat solution.

\begin{proof}[Proof of Corollary \ref{A}]
In light of the discussion above, the hypotheses of Corollary \ref{A} imply that there exists a smooth, $W^{2,p}_{-\tau}$-asymptotically flat, scalar flat conformal metric $g= \Phi^{4/n-2}g_0$ with $\Phi - 1\in W^{2,p}_{-\tau}(M^n)$. Since $p>n/2$, the Morrey embedding theorem for weighted Sobolev spaces (see \cite[Theorem 1.2]{Bart86}) implies $\Phi - 1\in C^0_{-\tau}(M^n)$. Therefore, $\Phi$ satisfies
	\begin{align*}
	\begin{cases}
	-\Delta_{g_0} \Phi + \frac{n-2}{4(n-1)}R_{g_0} \Phi = 0 & \text{in }M \\
	|\Phi(x) - 1| = O(|x|^{-\tau}) & \text{as }|x|\rightarrow\infty.
	\end{cases}
	\end{align*}

	Let $\beta\in(1,2)$ be a constant and let $\{M_j\}_{j=1}^\infty$ be an exhaustion of $M$ by compact manifolds with boundary. Then since $|\Phi(x)-1|=O(|x|^{-\tau})$, there exists $j = j(\beta)>0$ such that $\beta > \Phi$ on $M\backslash M_j$. Therefore
	\begin{align*}
	\begin{cases}-\Delta_{g_0} \beta + \frac{n-2}{4(n-1)}R_{g_0} \beta > 0 & \text{in }M_j \\
	\beta \geq \Phi & \text{on }\partial M_j,
	\end{cases} 
	\end{align*}
	and maximum principle then implies $\beta \geq \Phi$ in $M_j$. Therefore
	\begin{align*}
	\Phi \leq \beta \quad\text{in }M. 
	\end{align*}
	
	We now run the argument in the proof of Case 1 in Theorem \ref{A'} with $\beta$ in place of $e^{\frac{n-2}{2}\Lambda}$, which ultimately yields a metric $g_{u_\beta} = u_\beta^{\frac{4}{n-2}}g_0$, $u_\beta\in C^{1,1}_{\operatorname{loc}}(M^n)$, satisfying
	\begin{align*}
	\begin{cases}
	\lambda(g_{u_\beta}^{-1}A_{g_{u_\beta}})\in\partial \Gamma & \text{a.e.~on }M \\
	\Phi \leq u_\beta \leq \beta & \text{on }M. 
	\end{cases}
	\end{align*}
	To complete the proof, it remains to observe that all estimates obtained in the above procedure are independent of $\beta$, and hence we may take $\beta\rightarrow 1$ along a suitable sequence to obtain a $C^{1,1}$ solution $g_u = u^{\frac{4}{n-2}}g_0$ with $\Phi \leq u \leq 1$ on $M$. Since $|\Phi-1| = O(|x|^{-\tau})$, $C^0_{-\tau}$-asymptotic flatness of $g_u$ then follows. 
\end{proof}

\section{Examples in the positive case}\label{ex}

In this final section we give a class of examples where Corollary \ref{A} applies. In \cite{GWW14}, the authors considered the following Schwarzchild-type metric associated to the $\sigma_k$-operators for $k<\frac{n}{2}$. Let $(\rho,\theta)$ denote the standard spherical coordinates on $M^n = (0,\infty)\times \mathbb{S}^{n-1} = R^n\backslash\{0\}$, $d\Theta^2$ the round metric on $\mathbb{S}^{n-1}$, $r$ the radial coordinate on $\mathbb{R}^n$ and $|dx|^2$ the Euclidean metric on $\mathbb{R}^n$. Then
\begin{align*}
g_{\operatorname{Sch}}^k \defeq \bigg(1-\frac{2m}{\rho^{\frac{n-2k}{k}}}\bigg)^{-1}d\rho^2 + \rho^2 d\Theta^2 = \bigg(1+\frac{m}{2r^{\frac{n-2k}{k}}}\bigg)^{\frac{4k}{n-2k}}|dx|^2
\end{align*}
satisfies
\begin{align*}
\lambda\big((g_{\operatorname{Sch}}^k)^{-1}A_{g_{\operatorname{Sch}}^k}\big)\in\partial\Gamma_k^+ \quad \text{on }M^n,
\end{align*}
as can be seen from the proof of Proposition \ref{76} below (which gives a slightly more general construction). Moreover, $g_{\operatorname{Sch}}^k$ is $W^{l,\infty}_{-\tau}$-asymptotically flat for all $l\in\mathbb{N}$ when $\tau = \frac{n-2k}{k}$, and $W^{l,p}_{-\tau}$-asymptotically flat for all $l\in\mathbb{N}$ and $p\in[1,\infty)$ when $\tau<\frac{n-2k}{k}$.  
When $k=1$, $g_{\operatorname{Sch}}^1$ is the standard Riemannian Schwarzchild metric and $m$ is the ADM mass. When $k\geq 2$, $g_{\operatorname{Sch}}^k$ corresponds to the Schwarzchild metric in the corresponding Lovelock theory of gravity \cite{Love71}, and $m$ is related to the so-called Gauss-Bonnet-Chern (GBC) mass. 

More generally, associated to any cone $\Gamma$ satisfying \eqref{21'}, \eqref{22'} and $\mu_\Gamma^+>1$, we define the following Schwarzchild-type metric: 
\begin{align*}
g_{\operatorname{Sch}}^{\Gamma} = \bigg(1+\frac{m}{2r^{\mu_\Gamma^+-1}}\bigg)^{\frac{4}{\mu_\Gamma^+-1}}|dx|^2,
\end{align*}
where $m\in\mathbb{R}$ is a parameter. We have: 
\begin{prop}\label{76}
	Suppose $\Gamma$ satisfies \eqref{21'}, \eqref{22'} and $\mu_\Gamma^+>1$. Then the metric $g_{\operatorname{Sch}}^\Gamma$ satisfies
	\begin{align*}
	\lambda\big((g_{\operatorname{Sch}}^\Gamma)^{-1}A_{g_{\operatorname{Sch}}^\Gamma}\big)\in\partial\Gamma \quad \text{on }M^n. 
	\end{align*}
	Moreover, the scalar curvature of $g_{\operatorname{Sch}}^\Gamma$ is positive if $\Gamma\not=\Gamma_1^+$. 
\end{prop}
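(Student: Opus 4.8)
The plan is to exploit the fact that $g_{\operatorname{Sch}}^\Gamma$ is conformal to the Euclidean metric and to reduce everything to a one-variable ODE. Set $a = \mu_\Gamma^+ - 1 > 0$ and denote $g = g_{\operatorname{Sch}}^\Gamma = e^{2w}|dx|^2$ with $w(r) = \tfrac{2}{a}\ln\phi(r)$ and $\phi(r) = 1 + \tfrac{m}{2}r^{-a}$; requiring $\phi>0$ on all of $M^n=\mathbb{R}^n\setminus\{0\}$ forces $m\geq 0$, and $m=0$ gives the flat metric, so we take $m>0$. Since $A_{|dx|^2}\equiv 0$, the conformal transformation law for the Schouten tensor recalled in the introduction gives $A_g = -\nabla^2 w + dw\otimes dw - \tfrac12|\nabla w|^2\,|dx|^2$ with respect to the Euclidean metric. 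For a radial $w$, a direct Cartesian computation ($\partial_i w = w'\tfrac{x_i}{r}$, and so on) shows that the matrix $(A_g)_{ij}$, and hence $g^{-1}A_g$, is diagonalised by the radial direction together with its orthogonal complement, with eigenvalues
\[
\mu_{\mathrm{rad}} = e^{-2w}\Big({-w''}+\tfrac12(w')^2\Big), \qquad \mu_{\mathrm{tan}} = e^{-2w}\Big({-\tfrac{w'}{r}}-\tfrac12(w')^2\Big),
\]
the second with multiplicity $n-1$.

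The first assertion rests on two observations. First, $\mu_{\mathrm{rad}} = -\mu_\Gamma^+\,\mu_{\mathrm{tan}}$, so that the eigenvalue vector equals $\mu_{\mathrm{tan}}\cdot(-\mu_\Gamma^+,1,\dots,1)$ pointwise; since $(-\mu_\Gamma^+,1,\dots,1)\in\partial\Gamma$ by the definition of $\mu_\Gamma^+$ and $\partial\Gamma$ is a cone with vertex at the origin, it follows that $\lambda(g^{-1}A_g)\in\partial\Gamma$ as soon as we know $\mu_{\mathrm{tan}}\geq 0$. After clearing $e^{-2w}$, the identity $\mu_{\mathrm{rad}} = -\mu_\Gamma^+\mu_{\mathrm{tan}}$ is equivalent to the ODE $w'' + \tfrac{a}{2}(w')^2 + \tfrac{\mu_\Gamma^+}{r}w' = 0$, and substituting $w = \tfrac{2}{a}\ln\phi$ one verifies the algebraic identity $w'' + \tfrac{a}{2}(w')^2 = \tfrac{2}{a}\tfrac{\phi''}{\phi}$, which collapses the ODE to the Euler equation $\phi'' + \tfrac{\mu_\Gamma^+}{r}\phi' = 0$; its non-constant solutions are exactly the powers $r^{-a}$, which is precisely the profile appearing in $\phi$. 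Second, $\mu_{\mathrm{tan}}\geq 0$: computing $w' = -\tfrac{2m}{r(2r^a+m)}$ gives ${-\tfrac{w'}{r}}-\tfrac12(w')^2 = \tfrac{4m\,r^{a-2}}{(2r^a+m)^2} > 0$ for $m>0$. This proves $\lambda(g^{-1}A_g)\in\partial\Gamma$ on $M^n$.

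For the scalar curvature, recall that $R_g = 2(n-1)\operatorname{tr}_g(g^{-1}A_g) = 2(n-1)\,\sigma_1(\lambda(g^{-1}A_g))$. From the above, $\sigma_1(\lambda(g^{-1}A_g)) = \mu_{\mathrm{rad}} + (n-1)\mu_{\mathrm{tan}} = (n-1-\mu_\Gamma^+)\,\mu_{\mathrm{tan}}$, and $\mu_{\mathrm{tan}}>0$ by the previous paragraph. It remains to note $\mu_\Gamma^+ < n-1$ whenever $\Gamma\neq\Gamma_1^+$: if $\mu_\Gamma^+ = n-1$ then $(-(n-1),1,\dots,1)\in\partial\Gamma$, whence, since $\Gamma$ is a symmetric convex cone containing $\Gamma_n^+$, the convex cone generated by $\Gamma_n^+$ together with all permutations of this point is contained in $\overline{\Gamma}$; a short computation shows this cone is all of $\overline{\Gamma_1^+}$, forcing $\Gamma=\Gamma_1^+$. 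Hence $R_g = 2(n-1)(n-1-\mu_\Gamma^+)\mu_{\mathrm{tan}} > 0$ when $\Gamma\neq\Gamma_1^+$, as claimed.

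All computations here are elementary; the one genuinely structural point is the recognition that the eigenvalue vector is a nonnegative multiple of $(-\mu_\Gamma^+,1,\dots,1)$, which reduces the differential inclusion to the defining property of $\mu_\Gamma^+$ and makes the choice of exponent $\mu_\Gamma^+-1$ transparent — it is exactly the exponent making the radial ODE an Euler equation with the required non-constant solution. The only side fact requiring a separate (short) argument is the implication $\Gamma\neq\Gamma_1^+\Rightarrow\mu_\Gamma^+<n-1$, sketched above.
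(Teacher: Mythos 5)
Your proof is correct and uses essentially the same approach as the paper: for a radial conformal factor over flat space one diagonalises $g^{-1}A_g$ into a radial eigenvalue and a tangential eigenvalue of multiplicity $n-1$, and the key structural observation in both arguments is that the eigenvalue vector is a nonnegative multiple of $(-\mu_\Gamma^+,1,\dots,1)$, so that membership in $\partial\Gamma$ follows from the defining property of $\mu_\Gamma^+$ and $\sigma_1 = (n-1-\mu_\Gamma^+)\cdot(\text{positive factor})$ gives the sign of the scalar curvature; your $e^{2w}$ parametrisation and the Euler-equation reformulation are cosmetically different from the paper's use of $v^{4/(n-2)}$ and $\chi_1,\chi_2$ but amount to the same computation. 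Two of your side remarks genuinely sharpen the exposition: you note explicitly that $m>0$ is required (both so that $\phi>0$ on $\mathbb{R}^n\setminus\{0\}$ and so that the tangential eigenvalue, the paper's $\chi_1$, is actually positive --- the paper writes $\chi_1>0$ without flagging this constraint on $m$), and you supply a proof of the implication $\Gamma\neq\Gamma_1^+\Rightarrow\mu_\Gamma^+<n-1$, which the paper asserts without argument; your sketch is correct and can be tightened by observing that the $n$ permutations $v^{(i)}$ of $(-(n-1),1,\dots,1)$ satisfy $\sum_i v^{(i)}=0$, so the convex cone they generate is the whole hyperplane $\{\sigma_1=0\}$, whence the convex cone generated by them together with $\Gamma_n^+$ is $\overline{\Gamma_1^+}$.
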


\begin{proof}
	If $g_v = v^{\frac{4}{n-2}}g_{\mathbb{R}^n}$ where $v=v(r)$, then a routine computation yields
	\begin{align*}
	g_{\mathbb{R}^n}^{-1}A_{g_v} = \chi_1\operatorname{Id} - \chi_2 \frac{x}{r}\otimes \frac{x}{r}
	\end{align*}
	where
	\begin{align*}
	\chi_1 = -\frac{2}{n-2}\frac{v_r}{vr} - \frac{2}{(n-2)^2}\frac{v_r^2}{v^2} \quad\text{and}\quad \chi_2 = \frac{2}{n-2}\bigg(\frac{v_{rr}}{v} - \frac{v_r}{vr}\bigg) -\frac{2n}{(n-2)^2}\frac{v_r^2}{v^2}. 
	\end{align*}
	In our case, $v = (1+\frac{m}{2r^{p}})^{\frac{n-2}{p}}$ where $p = \mu_\Gamma^+ -1$, and another routine computation therefore yields 
	\begin{align*}
	\chi_1 = \frac{m}{r^{p+2}}\bigg(1+\frac{m}{2r^p}\bigg)^{-1} - \frac{m^2}{2r^{2p+2}}\bigg(1+\frac{m}{2r^p}\bigg)^{-2} > 0 \quad \text{and} \quad \chi_2 = (p+2)\chi_1. 
	\end{align*}
	Now, $\lambda(g_{\mathbb{R}^n}^{-1}A_{g_v}) = (\chi_1-\chi_2, \chi_1, \dots, \chi_1)$, and by definition of $\mu_\Gamma^+$, this vector belongs to $\partial\Gamma$ if and only if $\chi_1 - \chi_2 = -\mu_\Gamma^+\chi_1$, which is clearly true by definition of $p$. 
	
	Finally, to see that the scalar curvature of $g_{\operatorname{Sch}}^\Gamma$ is positive when $\Gamma\not=\Gamma_1^+$, we compute
	\begin{align*}
	\sigma_1(g_v^{-1}A_{g_v}) & = \bigg(1+\frac{m}{2r^p}\bigg)^{-4/p}\sigma_1(g_{\mathbb{R}^n}^{-1}A_{g_v}) = \bigg(1+\frac{m}{2r^p}\bigg)^{-4/p}(n-1-\mu_\Gamma^+)\chi_1.
	\end{align*}
	The claim then follows since $\Gamma\not=\Gamma_1^+$ is equivalent to $\mu_\Gamma^+<n-1$, and we have already seen that $\chi_1>0$.
\end{proof}

It follows from Proposition \ref{76} that if $\Gamma\not=\Gamma_1^+$ and $\Gamma'\supset \Gamma$ is a cone satisfying \eqref{21'}, \eqref{22'} and $\overline{\Gamma'}\cap\overline\Gamma=\{0\}$, then
\begin{align*}
\lambda\big((g_{\operatorname{Sch}}^\Gamma)^{-1}A_{g_{\operatorname{Sch}}^\Gamma}\big)\in\Gamma'\quad \text{on }M^n. 
\end{align*}
Moreover, $g_{\operatorname{Sch}}^\Gamma$ is $W^{l,p}_{-\tau}$-asymptotically flat for all $l\in\mathbb{N}$ and $p<\infty$ when $\tau<\mu_{\Gamma}^+-1$. Thus, $g_{\operatorname{Sch}}^\Gamma$ satisfies the assumptions of Corollary \ref{A} for such a cone $\Gamma'$. Now, the existence of a metric $g\in[g_{\operatorname{Sch}}^\Gamma]$ satisfying 
\begin{align*}
\lambda(g^{-1}A_g)\in\partial\Gamma' \quad \text{on }M^n
\end{align*}
is trivial in this case, since one can simply take $g = g_{\operatorname{Sch}}^{\Gamma'}$. However, one can obtain non-trivial examples as follows. For any given compact subset $K\subset M$, there exists $\ep>0$ depending only on $K$ and $g_{\operatorname{Sch}}^\Gamma$ such that the following holds: if $h$ is a smooth $(0,2)$-tensor, compactly supported in $K$ and satisfying $\|h\|_{C^2(K)} \leq \ep$, then $g^h \defeq g_{\operatorname{Sch}}^\Gamma+h$ is a Riemannian metric on $M$ satisfying $\lambda((g^h)^{-1}A_{g^h})\in\Gamma'$ on $M$. For a general perturbation $h$, $[g_{\operatorname{Sch}}^\Gamma] \not=[g^h]$ and thus Corollary \ref{A} yields a non-trivial existence result in the conformal class $[g^h]$.

\renewcommand{\baselinestretch}{0}
\footnotesize
\bibliography{references}{}
\bibliographystyle{siam}

\end{document}